\newtheorem{theorem}{Theorem}
\newtheorem{lemma}[theorem]{Lemma}
\newenvironment{proof}[1][Proof]{\noindent\textbf{#1.} }{\ \rule{0.5em}{0.5em}}
\begin{document}

\title{Explicit Euclidean Embeddings in Permutation Invariant Normed Spaces}
\author{Daniel J. Fresen, Yale Math}
\maketitle

\begin{abstract}
Let $(X,\left\Vert \cdot \right\Vert )$ be a real normed space of dimension $%
N\in \mathbb{N}$ with a basis $(e_{i})_{1}^{N}$ such that the norm is
invariant under coordinate permutations. Assume for simplicity that the
basis constant is at most $2$. Consider any $n\in \mathbb{N}$ and $%
0<\varepsilon <1/4$ such that $n\leq c(\log \varepsilon ^{-1})^{-1}\log N$.
We provide an explicit construction of a matrix that generates a $%
(1+\varepsilon )$ embedding of $\ell _{2}^{n}$ into $X$.
\end{abstract}

\section{Introduction}

The modern formulation of Dvoretzky's theorem states that there exists a
function $\xi (\cdot ):(0,1/2)\rightarrow (0,\infty )$ such that for all $%
(N,n,\varepsilon )\in \mathbb{N}\times \mathbb{N}\times (0,1/2)$ with $n\leq
\xi (\varepsilon )\log N$, and any real normed space $(X,\left\Vert \cdot
\right\Vert )$ with $\dim (X)=N$, there exists a linear map $T:\mathbb{R}%
^{n}\rightarrow X$ such that for all $x\in \mathbb{R}^{n}$,%
\begin{equation}
(1-\varepsilon )|x|\leq \left\Vert Tx\right\Vert \leq (1+\varepsilon )|x|
\label{1+epsilon embedding}
\end{equation}%
where $\left\vert \cdot \right\vert $ denotes the standard Euclidean norm.
The normed space $(\mathbb{R}^{n},\left\vert \cdot \right\vert )$ is denoted 
$\ell _{2}^{n}$. Equation (\ref{1+epsilon embedding}) expresses the fact
that $\ell _{2}^{n}$ can be $(1+\varepsilon )$ embedded in $X$.
Geometrically, this means that any centrally symmetric convex body in high
dimensional Euclidean space has cross-sections of lower dimension that are
approximately ellipsoidal. The logarithmic dependence on $N$, which is due
to Milman \cite{Mil}, is optimal in the general setting (specifically for $%
X=\ell _{\infty }^{N}$) but can be greatly improved for other spaces such as 
$\ell _{1}^{N}$, where one can take $n=\left\lfloor c\varepsilon
^{2}N\right\rfloor $ for some universal constant $c>0$. On the other hand,
the optimal dependence on $\varepsilon $ is unknown. The best current bound
is $\xi (\varepsilon )=c\varepsilon (\log \varepsilon ^{-1})^{-2}$ by
Schechtman \cite{Sch1}. If weaker forms of Knaster's problem are true \cite%
{Mil88, KaSz}, then one could take $\xi (\varepsilon )=c(\log \varepsilon
^{-1})^{-1}$, which would be optimal since this is the correct dependence in 
$\ell _{\infty }^{N}$. We refer to \cite{MiSc, Sch1, Sch2} for a more
detailed background.

The proofs of Dvoretzky's theorem typically make use of random embeddings
and are nonconstructive. A natural question (see for example Section 2.2 in 
\cite{JoSc} and Section 4 in \cite{SzICM}) is whether or not one can make
these and other probabilistic constructions in functional analysis explicit,
or at least decrease the randomness in some way.

Of course it is impossible to find an explicit Euclidean subspace of a
completely general and unspecified normed space, otherwise all subspaces
would be Euclidean. This follows from rotational invariance of the class of
symmetric convex bodies in $\mathbb{R}^{N}$ and the fact that the orthogonal
group $O(N)$ acts transitively on the Grassmannian $G_{N,n}$. The same is
true even if we assume that the space has nontrivial cotype, or that the
unit ball is in John's position. One either has to construct explicit
subspaces for specific spaces individually, or (most likely) we need to
impose some sort of symmetry in order to get a grip on the space.

The case $X=\ell _{1}^{N}$ is particularly important from the point of view
of applications, see for example \cite{IndSz} and the references therein.
For this space, there are various algorithms to compute embeddings or
decrease randomness \cite{AM06, AM07, GLR, GLW08, Ind00, Ind07, IndSz, LS,
Sch0}, although there is still no truly explicit embedding that is as good
as a random one. For $X=\ell _{p}^{N}$, where $p\in \mathbb{N}$ is an even
integer and%
\begin{equation*}
N\geq \binom{n+p-1}{p}
\end{equation*}%
which is true whenever $n\leq N^{1/p}$, the space $\ell _{2}^{n}$ embeds
isometrically into $X$, \cite{Mil88, KoKo}. In this case there are also
various explicit embeddings. For example, the identity%
\begin{equation*}
6\left( \sum_{i=1}^{4}x_{i}^{2}\right) ^{2}=\sum_{1\leq i<j\leq 4}\left(
x_{i}+x_{j}\right) ^{4}+\sum_{1\leq i<j\leq 4}\left( x_{i}-x_{j}\right) ^{4}
\end{equation*}%
defines an isometric embedding of $\ell _{2}^{4}$ into $\ell _{4}^{12}$. As
far as we are aware, there are no known explicit embeddings (in the
classical sense) or even algorithms, that apply to a wide class of spaces
such as\ those with an unconditional or permutation invariant basis (which
would be the two most natural cases to consider).

The purpose of this paper is to provide explicit $(1+\varepsilon )$
embeddings, as in (\ref{1+epsilon embedding}), of $\ell _{2}^{n}$ into a
general normed space $(X,\left\Vert \cdot \right\Vert )$ with a permutation
invariant basis, provided $n\leq c(\log \varepsilon ^{-1})^{-1}\log N$ and
we have some control over the basis constant. Our main result, Theorem \ref%
{Main result}, provides more details and complements (non-explicit) results
of Bourgain and Lindenstrauss \cite{BoLiMi} and Tikhomirov \cite{Tich}, who
studied symmetric spaces (spaces with a permutation invariant and $1$%
-unconditional basis). The dependence on both $N$ and $\varepsilon $ that we
provide is optimal.

\section{Main result}

For any $N\in \mathbb{N}$ let $S_{N}$ denote the permutation group on $N$
elements. Let $(X,\left\Vert \cdot \right\Vert )$ denote a real normed space
of dimension $N$ with a basis $(e_{i})_{1}^{N}$ such that the norm $%
\left\Vert \cdot \right\Vert $ is invariant under the action of $S_{N}$,
i.e. for all $(a_{i})_{1}^{N}\in \mathbb{R}^{N}$ and all $\sigma \in S_{N}$,%
\begin{equation*}
\left\Vert \sum_{i=1}^{N}a_{\sigma (i)}e_{i}\right\Vert =\left\Vert
\sum_{i=1}^{N}a_{i}e_{i}\right\Vert 
\end{equation*}%
The basis constant of $(e_{i})_{1}^{N}$ is defined as the smallest value of $%
K\geq 1$ such that for all $(a_{i})_{1}^{N}\in \mathbb{R}^{N}$ and all $%
j\leq N$,%
\begin{equation*}
\left\Vert \sum_{i=1}^{j}a_{i}e_{i}\right\Vert \leq K\left\Vert
\sum_{i=1}^{N}a_{i}e_{i}\right\Vert 
\end{equation*}%
Consider any $n\in \mathbb{N}$, $n\geq 6$, and $0<\varepsilon <(2K)^{-1}$.
Set $\delta =\varepsilon /1429$, $\sigma =\delta ^{-4}$ and $\alpha =2\delta
^{-4}(\log \delta ^{-1})^{1/2}$. Let $\Phi $ denote the standard normal
cumulative distribution, and for each integer point $x\in \mathbb{Z}^{n}\cap
\left( \alpha \sqrt{n}B_{2}^{n}\right) $ define%
\begin{equation}
m(x)=\left\lfloor N\prod_{i=1}^{n}\left( \Phi \left( \frac{x_{i}+1/2}{\sigma 
}\right) -\Phi \left( \frac{x_{i}-1/2}{\sigma }\right) \right) \right\rfloor 
\label{first multiplicity}
\end{equation}%
\begin{equation*}
N^{\prime }=\sum_{\substack{ x\in \mathbb{Z}^{n} \\ |x|\leq \alpha \sqrt{n}}}%
m(x)
\end{equation*}%
\begin{equation*}
m^{\prime }(x)=\left\{ 
\begin{array}{ccc}
m(x) & : & x\neq 0 \\ 
m(x)+N-N^{\prime } & : & x=0%
\end{array}%
\right. 
\end{equation*}%
Consider the $N\times n$ matrix $T$ defined as follows: For each $x\in 
\mathbb{Z}^{n}\cap \left( \alpha \sqrt{n}B_{2}^{n}\right) $, repeat the
vector $x^{\prime }=\sqrt{n}x/|x|$ a total of $m^{\prime }(x)$ times as a
row of the matrix $T$.

\begin{theorem}
\label{Main result}There exists a universal constant $c>0$ with the
following property. Consider any $n,N\in \mathbb{N}$, $K\geq 1$, and $%
0<\varepsilon <(2K)^{-1}$ such that%
\begin{equation*}
6\leq n\leq c(\log \varepsilon ^{-1})^{-1}\log N
\end{equation*}%
Let $(X,\left\Vert \cdot \right\Vert )$ be a real normed space of dimension $%
N$ with a basis $(e_{i})_{1}^{N}$ of basis constant $K$ that is invariant
under permutations and let $T$ be the $N\times n$ matrix defined above. Then
for all $x\in \mathbb{R}^{n}$,%
\begin{equation*}
(1-K\varepsilon )M\left( \sum_{i=1}^{n}x_{i}^{2}\right) ^{1/2}\leq
\left\Vert \sum_{i=1}^{N}\sum_{j=1}^{n}T_{i,j}x_{j}e_{i}\right\Vert \leq
(1+K\varepsilon )M\left( \sum_{i=1}^{n}x_{i}^{2}\right) ^{1/2}
\end{equation*}%
where $M=\left\Vert v\right\Vert $ and $v$ is defined by (\ref{v def}). One
can take $c=1/100$, for example. To embed $\ell _{2}^{k}$ into $X$ for $k<6$%
, simply take $n=6$ and use the matrix $\widetilde{T}$ which consists of the
first $k$ columns of $T$.
\end{theorem}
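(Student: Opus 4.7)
By homogeneity it suffices to prove the estimate for unit $x\in S^{n-1}$. Because $\|\cdot\|$ is $S_N$-invariant, the scalar $\|\sum_i (Tx)_i e_i\|$ depends only on the multiset $\mathcal M(x) := \{\langle T_i, x\rangle : 1\le i\le N\}$, where $T_i$ denotes the $i$-th row of $T$; in particular $M = \|v\|$ is determined by $\mathcal M(e_1)$. The plan splits into two parts: (a) show that $\mathcal M(x)$ is close to $\mathcal M(e_1)$ as a multiset, uniformly in unit $x$; and (b) convert multiset closeness into a $(1\pm K\varepsilon)$ multiplicative comparison of norms via permutation invariance and the basis constant $K$.

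Step (a) is the derandomized heart of the argument. The rows $T_i$ form a deterministic quadrature for uniform measure on $\sqrt{n}\,S^{n-1}$: if $Y$ has i.i.d.\ $\mathcal N(0,\sigma^2)$ coordinates then $\sqrt{n}\,Y/|Y|$ is uniform on $\sqrt{n}\,S^{n-1}$ for every $\sigma$, and by construction $m(y)/N$ approximates $\mathbb P(Y \in y + [-\tfrac12,\tfrac12]^n)$, with the correction $m'(0)$ absorbing the integer floor so that the total row count is exactly $N$. For any unit $x$ I would bound the Kolmogorov distance between the empirical c.d.f.\ of $\mathcal M(x)$ and the standard normal c.d.f.\ by three error terms: the tail cutoff at radius $\alpha\sqrt{n}$, which is $\le \delta$ once $\alpha\ge c\delta^{-4}\sqrt{\log\delta^{-1}}$ by Gaussian tail estimates; the lattice discretization error, of order $\sigma^{-1}=\delta^4$ from replacing the continuous density by its cell-averaged values; and the floor in (\ref{first multiplicity}), losing at most $1$ per lattice point for a total of $\le (\alpha\sqrt n)^n/N \le \delta$ under the hypothesis $n \le c(\log\varepsilon^{-1})^{-1}\log N$. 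The rotational invariance of the underlying continuous measure makes this bound \emph{uniform} in $x$, so $\mathcal M(x)$ and $\mathcal M(e_1)$ are $O(\delta)$-close in Kolmogorov distance for every unit $x$, with no union bound over a net required.

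Converting Kolmogorov closeness to a pointwise-after-sorting estimate on the effectively bounded range $|t|\le \alpha$, one obtains decreasing rearrangements $a^*$ of $\mathcal M(x)$ and $v^*$ of $\mathcal M(e_1)$ satisfying $|a^*_i - v^*_i|\le C\delta$ off an exceptional set $B$ of $\le C\delta N$ indices, on which both $|a^*_i|$ and $|v^*_i|$ are still bounded by $\alpha$. By permutation invariance we may replace $Tx$ by its sorted rearrangement, reducing the task to bounding $\|\sum_i (a^*_i - v^*_i)e_i\|$. The complement of $B$ contributes at most $C\delta M$ by a triangle inequality along coordinate blocks; the bad block $B$ contributes $O(\alpha)\cdot \|\sum_{i\in B} e_i\|$, which is controlled against $\varepsilon M/K$ using $|B|\le \delta N$ together with a Khintchine-type lower bound on $M$ valid in any permutation-invariant normed space.

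The main obstacle is this last passage from a coordinate-wise quantile approximation to a norm bound: in the absence of $1$-unconditionality, one cannot compare $\|\sum b_i e_i\|$ to $\|\sum |b_i| e_i\|$ for free, so the block inequalities must invoke the basis constant $K$ in addition to permutation invariance, and the contribution from $B$ must be handled by a direct estimate on $\|\sum_{i\in B} e_i\|$ rather than by monotone rearrangement. Once these inequalities are assembled and the numerical constant $\delta=\varepsilon/1429$ is propagated through, the claimed $(1\pm K\varepsilon)M$ sandwich follows.
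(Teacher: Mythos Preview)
Your high-level strategy is the paper's: reduce to $\theta\in S^{n-1}$, observe that $\|T\theta\|$ depends only on the sorted vector of inner products $(u_i)$, compare this to a fixed reference vector $v$ built from the marginal $\Phi_n$ of uniform measure on $\sqrt{n}S^{n-1}$, and bound $\|u-v\|$ against $K\|v\|$. You also correctly identify the three error sources in the quadrature (tail cutoff, lattice rounding, floor) and the fact that rotational invariance of the underlying Gaussian removes any need for a net argument.

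The gap is in step (b). An additive Kolmogorov bound $|F_\theta(t)-\Phi_n(t)|\le C\delta$ does \emph{not} yield $|a_i^*-v_i^*|\le C\delta$ off a set of only $C\delta N$ indices, because the derivative of $\Phi_n^{-1}$ at level $s$ is $1/\phi_n(\Phi_n^{-1}(s))\asymp 1/\min(s,1-s)$. For an index at tail probability $p$ the quantile error you get from Kolmogorov distance $C\delta$ is of order $\delta/p$, so to keep it below $C'\delta$ you must discard all indices with $p\lesssim 1/C'$, a \emph{constant} fraction of $N$, not $C\delta N$. Your fallback---bounding the bad block by $O(\alpha)\cdot\|\sum_{i\in B}e_i\|$ and invoking an unspecified ``Khintchine-type lower bound on $M$''---then has to absorb this constant fraction, and you give no argument that such a bound holds in a merely permutation-invariant space with basis constant $K$ (without $1$-unconditionality there is no obvious way to lower-bound $\|v\|$ by $\|\sum_{i\in B}e_i\|$). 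Also, the rows lie on $\sqrt nS^{n-1}$, so the entries of $a^*$ are bounded by $\sqrt n$, not by $\alpha$.

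The paper avoids this entirely by proving a \emph{multiplicative} tail estimate rather than an additive one: for $1\le t\le(1-3\delta)\sqrt n$ it shows
\[
\Phi_n\bigl((1-10\delta)t\bigr)\le F_\theta(t)\le \Phi_n\bigl((1+12\delta)t\bigr),
\]
which inverts directly to $|u_i-v_i|\le 29\delta\,|v_i|$ on the tails. In the bulk $|t|\le 1.5$ the paper does have only an additive error $|u_i-v_i|\le 7\delta$, but it does not try to compare $7\delta\|\sum e_i\|$ to $\|v\|$; instead it uses permutation invariance to move these finitely many bulk-error coordinates out to indices where $|v_i|\ge 5\times 10^{-3}$, converting the additive $7\delta$ into the multiplicative $1400\delta|v_i|$. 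After this repositioning every coordinate satisfies $|(\text{error})_i|\le C\delta|v_i|$, and the basis constant (together with permutation invariance, which upgrades initial-segment projections to arbitrary coordinate projections) gives $\|u-v\|\le C\delta K\|v\|$ with no separate ``bad set'' or Khintchine step. This multiplicative-in-the-tails control, and the permutation trick that feeds the additive bulk error into it, are the missing ideas in your plan.
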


Let us briefly sketch the proof. Identify $X$ with $\mathbb{R}^{N}$ so that $%
(e_{i})_{1}^{N}$ become the standard basis vectors,%
\begin{equation*}
e_{i}=\left( 0,0\ldots ,0,1,0\ldots 0,0\right)
\end{equation*}%
Let $(\theta _{i})_{1}^{N}$ denote the rows of $T$. The empirical
distribution of the sequence $(\theta _{i})_{1}^{N}$ is the probability
measure on $\mathbb{R}^{n}$ defined by%
\begin{equation*}
\mu =\frac{1}{N}\sum_{i=1}^{N}\delta (\theta _{i})
\end{equation*}%
where $\delta (x)$ is the Dirac point mass at $x$. This measure is a
discrete approximation to normalized Haar measure on $\sqrt{n}S^{n-1}$. This
should be clear (at least in a rough sense) by (\ref{first multiplicity})
and is made precise in Lemma \ref{main approx of quantiles}. Therefore for
any $\theta \in S^{n-1}$ the measure%
\begin{equation*}
\mu _{\theta }=\text{\textrm{Proj}}_{\theta }\mu =\frac{1}{N}%
\sum_{i=1}^{N}\delta \left( \left\langle \theta ,\theta _{i}\right\rangle
\right)
\end{equation*}%
should approximate the standard normal distribution on $\mathbb{R}$. In
particular, the choice of $\theta $ has minimal effect on $\mu _{\theta }$.
Since the norm $\left\Vert \cdot \right\Vert $ is invariant under
permutations, the quantity%
\begin{equation*}
\left\Vert T\theta \right\Vert =\left\Vert \left( \left\langle \theta
,\theta _{i}\right\rangle \right) _{i=1}^{N}\right\Vert
\end{equation*}%
depends only on $\mu _{\theta }$, and does not oscillate much on $S^{n-1}$.
The result then follows by homogeneity. More details are given in Section %
\ref{mn pf}.

\section{\label{bkg}Background and preliminaries}

The symbols $B_{2}^{n}$ and $S^{n-1}$ denote the unit ball and sphere in $%
\mathbb{R}^{n}$. Let $\phi $ and $\Phi $ denote the standard normal density
and cumulative distribution,%
\begin{equation*}
\phi (t)=\frac{1}{\sqrt{2\pi }}\exp \left( -t^{2}/2\right)
\end{equation*}%
\begin{equation*}
\Phi (t)=\int_{-\infty }^{t}\phi (u)du
\end{equation*}%
Let $X=(X_{i})_{1}^{n}$ denote a random vector with the standard
multivariate normal distribution and for each $t\in \mathbb{R}$ define%
\begin{equation*}
\Phi _{n}(t)=\mathbb{P}\left\{ \frac{\sqrt{n}X_{1}}{\left(
\sum_{1}^{n}X_{i}^{2}\right) ^{1/2}}\leq t\right\}
\end{equation*}%
Note that by uniqueness of Haar measure, $\sqrt{n}X/|X|$ is uniformly
distributed on $\sqrt{n}S^{n-1}$, and for all $\theta \in S^{n-1}$,%
\begin{equation*}
\mathbb{P}\left\{ \left\langle \theta ,\sqrt{n}X/|X|\right\rangle \leq
t\right\} =\Phi _{n}(t)
\end{equation*}%
The corresponding density function, which is supported on $[-\sqrt{n},\sqrt{n%
}]$, can be written as%
\begin{equation*}
\phi _{n}(t)=\frac{(n-1)\mathrm{vol}_{n-1}(B_{2}^{n-1})}{n^{3/2}\mathrm{vol}%
_{n}(B_{2}^{n})}\left( 1-\frac{t^{2}}{n}\right) ^{(n-3)/2}
\end{equation*}%
Setting%
\begin{equation*}
\lambda _{n}=\frac{(n-1)\mathrm{vol}_{n-1}(B_{2}^{n-1})}{n^{3/2}\mathrm{vol}%
_{n}(B_{2}^{n})}
\end{equation*}%
we have the following representations%
\begin{equation*}
\lambda _{n}=\frac{(n-1)\Gamma (1+n/2)}{n^{3/2}\sqrt{\pi }\Gamma (1/2+n/2)}%
=\left( \int_{-\sqrt{n}}^{\sqrt{n}}\left( 1-\frac{t^{2}}{n}\right)
^{(n-3)/2}dt\right) ^{-1}
\end{equation*}%
The first one follows from the formula $\mathrm{vol}_{n}(B_{2}^{n})=\pi
^{n/2}/\Gamma (1+n/2)$ (see eg. \cite{Kol}) while the second follows from
the fact that $\int \phi _{n}=1$. Using the inequality $1-x\leq \exp (-x)$,
we see that $\lambda _{n}\geq 1/\sqrt{4\pi }$. By log-convexity of $\Gamma $%
, one can show that $\lambda _{n}\leq 1/\sqrt{2\pi }$. It should be clear
(for at least two different reasons) that%
\begin{equation*}
\lim_{n\rightarrow \infty }\phi _{n}(t)=\phi (t)
\end{equation*}%
\begin{equation*}
\lim_{n\rightarrow \infty }\Phi _{n}(t)=\Phi (t)
\end{equation*}%
This is an old result, see for example the discussion in \cite{DF}, and also
implies that $\lim_{n\rightarrow \infty }\lambda _{n}=1/\sqrt{2\pi }$.

\begin{lemma}
\label{normal tails}For all $t\geq \sqrt{n/(n-4)}$,%
\begin{equation*}
\frac{n}{2(n-3)t}\left( 1-\frac{t^{2}}{n}\right) \phi _{n}(t)\leq 1-\Phi
_{n}(t)\leq \frac{n}{(n-3)t}\left( 1-\frac{t^{2}}{n}\right) \phi _{n}(t)
\end{equation*}
\end{lemma}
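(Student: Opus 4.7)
The plan is to mimic the classical Mill's ratio derivation (which yields $1-\Phi(t)\leq\phi(t)/t$ for the standard normal) by finding an integration-by-parts identity tailored to the density $\phi_n(u)=\lambda_n(1-u^2/n)^{(n-3)/2}$. The starting observation is that direct differentiation gives
$$\phi_n'(u)=-\frac{(n-3)u}{n}\lambda_n(1-u^2/n)^{(n-5)/2},\qquad\text{hence}\qquad\phi_n(u)=-\frac{n(1-u^2/n)}{(n-3)u}\phi_n'(u),$$
which is the analogue of the normal-case identity $\phi(u)=-\phi'(u)/u$.

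I would then substitute this into $1-\Phi_n(t)=\int_t^{\sqrt{n}}\phi_n(u)\,du$ and integrate by parts, treating $w(u)=(1-u^2/n)/u=1/u-u/n$ as the smooth factor (so $w'(u)=-1/u^2-1/n$) and $\phi_n'(u)\,du$ as the other. The boundary term at $u=\sqrt{n}$ vanishes because $w(\sqrt{n})=\phi_n(\sqrt{n})=0$, leaving the exact identity
$$1-\Phi_n(t)=\frac{n(1-t^2/n)}{(n-3)t}\phi_n(t)-\frac{n}{n-3}\int_t^{\sqrt{n}}\left(\frac{1}{u^2}+\frac{1}{n}\right)\phi_n(u)\,du.$$

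From here both bounds fall out easily. The upper bound is immediate because the remainder integral is nonnegative. For the lower bound, monotonicity of $1/u^2$ on $[t,\sqrt{n}]$ gives $\int_t^{\sqrt{n}}(1/u^2+1/n)\phi_n(u)\,du\leq(1/t^2+1/n)(1-\Phi_n(t))$; substituting this into the identity and solving for $1-\Phi_n(t)$ yields
$$1-\Phi_n(t)\geq\frac{n(1-t^2/n)\phi_n(t)}{t\bigl((n-2)+n/t^2\bigr)}.$$
The factor $1/2$ in the lemma then corresponds exactly to the condition $(n-2)+n/t^2\leq 2(n-3)$, which rearranges to $t^2\geq n/(n-4)$\,---\,precisely the hypothesis on $t$.

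There is no serious obstacle; the only creative step is choosing the integrating factor $1/u-u/n$ so that the boundary contribution at $\sqrt{n}$ disappears cleanly and the resulting formula is an identity rather than a one-sided inequality, letting both bounds emerge from a single computation.
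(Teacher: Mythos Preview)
Your argument is correct and gives exactly the stated bounds, but it is a genuinely different proof from the paper's. The paper does not integrate by parts at all: it observes that $\phi_n$ is convex on $[\sqrt{n/(n-4)},\sqrt n]$ and log-concave everywhere, so on $[t,\infty)$ the density is sandwiched between its tangent line (below) and the exponential $\phi_n(t)\exp\bigl(-(n-3)t(x-t)/(n(1-t^2/n))\bigr)$ (above). Integrating the triangle under the tangent gives the lower bound with the factor $1/2$, and integrating the exponential gives the upper bound. In that approach the hypothesis $t\ge\sqrt{n/(n-4)}$ enters as the convexity threshold of $\phi_n$; in yours it emerges instead from the algebraic step $(n-2)+n/t^2\le 2(n-3)$. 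Your Mill's-ratio identity has the advantage of producing both inequalities from a single exact formula and of being iterable to higher-order corrections, while the paper's convexity/log-concavity sandwich is more geometric and transfers verbatim to any log-concave density without recomputing an antiderivative.
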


\begin{proof}
The function $\phi _{n}$ is convex on $[(n/(n-4))^{1/2},\infty )$ and
log-concave on $\mathbb{R}$, i.e. $\log \phi _{n}(x)$ is concave. The
function is therefore sandwiched, on the interval $[t,\infty )$, between an
affine function (below) and an exponential function (above). In particular,
for all $x\geq t$,%
\begin{equation*}
\frac{-(n-3)t\phi _{n}(t)}{n(1-t^{2}/n)}(x-t)+\phi _{n}(t)\leq \phi
_{n}(x)\leq \phi _{n}(t)\exp \left( -\frac{(n-3)(x-t)t}{n(1-t^{2}/n)}\right)
\end{equation*}%
and the result follows because%
\begin{equation*}
1-\Phi _{n}(t)=\int_{t}^{\infty }\phi _{n}(u)du
\end{equation*}
\end{proof}

Taking $n\rightarrow \infty $ in Lemma \ref{normal tails} we recover the
standard estimate for $1-\Phi (t)$. The following lemma is well known in the
more general setting of log-concave functions and we include its short
proofs for completeness.

\begin{lemma}
\label{convexity}The function $\psi _{n}=\phi _{n}\circ \Phi
_{n}^{-1}:(0,1)\rightarrow (0,\infty )$ is concave.
\end{lemma}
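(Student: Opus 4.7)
The plan is to invoke the standard fact that if $f$ is a log-concave probability density on an interval $I\subseteq\mathbb{R}$ with CDF $F$, then $f\circ F^{-1}$ is concave on $(0,1)$. So the first step is to verify that $\phi_n$ is log-concave on its support $(-\sqrt{n},\sqrt{n})$. This is immediate from the explicit formula $\phi_n(t)=\lambda_n(1-t^2/n)^{(n-3)/2}$: for $n\geq 3$ it suffices to check that $t\mapsto\log(1-t^2/n)$ is concave on $(-\sqrt{n},\sqrt{n})$, which follows from a routine computation of the second derivative (it equals $-2(n+t^2)/(n-t^2)^2<0$). For $n=3$ the density is constant on its support, so $\log\phi_n$ is trivially concave; the cases $n\in\{1,2\}$ are not needed here since the main theorem assumes $n\geq 6$.

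Next, I would differentiate $\psi_n$ directly. Since $\Phi_n^{-1}$ is differentiable with $(\Phi_n^{-1})'(u)=1/\phi_n(\Phi_n^{-1}(u))$, the chain rule gives
\begin{equation*}
\psi_n'(u)=\frac{\phi_n'(\Phi_n^{-1}(u))}{\phi_n(\Phi_n^{-1}(u))}=(\log\phi_n)'(\Phi_n^{-1}(u)).
\end{equation*}
Log-concavity of $\phi_n$ means $(\log\phi_n)'$ is nonincreasing on $(-\sqrt{n},\sqrt{n})$, while $\Phi_n^{-1}:(0,1)\to(-\sqrt{n},\sqrt{n})$ is strictly increasing. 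Composing a nonincreasing function with an increasing one yields a nonincreasing function, so $\psi_n'$ is nonincreasing on $(0,1)$, hence $\psi_n$ is concave.

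There is no real obstacle here: the only mild subtlety is that $\phi_n$ vanishes at the endpoints $\pm\sqrt{n}$, so $(\log\phi_n)'$ blows up to $\mp\infty$ at the endpoints, but this is consistent with $\psi_n'(u)\to\pm\infty$ as $u\to 0^+$ or $1^-$ and does not affect concavity on the open interval $(0,1)$. If one prefers to avoid referring to the derivative at boundary points, one can phrase the argument as: fix $0<u_1<u_2<1$ and let $t_i=\Phi_n^{-1}(u_i)$; then $\psi_n$ is smooth on $[u_1,u_2]$ and the above formula for $\psi_n'$ is valid there, so $\psi_n$ is concave on every compact subinterval of $(0,1)$ and therefore on all of $(0,1)$.
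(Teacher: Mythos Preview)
Your proof is correct and follows essentially the same route as the paper: both verify log-concavity of $\phi_n$, use the chain rule to write $\psi_n'(u)=(\log\phi_n)'(\Phi_n^{-1}(u))$, and deduce concavity from this (the paper differentiates once more to get $\psi_n''<0$, while you equivalently argue that $\psi_n'$ is nonincreasing as a composition of a nonincreasing function with an increasing one). Your version adds a bit more detail on verifying log-concavity and on the endpoint behaviour, but the core argument is the same.
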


\begin{proof}
Note that $g_{n}(t)=-\ln \phi _{n}(t)$ is convex. By the inverse function
theorem and the chain rule,%
\begin{eqnarray*}
\psi _{n}^{\prime }(t) &=&\frac{\phi _{n}^{\prime }\left( \Phi
_{n}^{-1}(t)\right) }{\phi _{n}\left( \Phi _{n}^{-1}(t)\right) } \\
&=&-g_{n}^{\prime }\left( \Phi _{n}^{-1}(t)\right)
\end{eqnarray*}%
Hence%
\begin{equation*}
\psi _{n}^{\prime \prime }(t)=\frac{-g_{n}^{\prime \prime }\left( \Phi
_{n}^{-1}(t)\right) }{\phi _{n}\left( \Phi _{n}^{-1}(t)\right) }<0
\end{equation*}
\end{proof}

\begin{lemma}
\label{Lip Phi}For all $0<a<b<1/2$,%
\begin{equation*}
\left\vert \Phi _{n}^{-1}(b)-\Phi _{n}^{-1}(a)\right\vert \leq \sqrt{\pi }%
\ln (b/a)
\end{equation*}
\end{lemma}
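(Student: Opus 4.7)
The plan is to rewrite the left-hand side as an integral via the inverse function theorem and then lower bound the integrand using the concavity of $\psi_n$ established in Lemma \ref{convexity}. Concretely, since $\Phi_n^{-1}$ is differentiable with derivative $1/(\phi_n\circ\Phi_n^{-1})=1/\psi_n$, I would write
\begin{equation*}
\Phi_n^{-1}(b)-\Phi_n^{-1}(a)=\int_a^b\frac{dt}{\psi_n(t)}.
\end{equation*}

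Next I would exploit the fact that $\psi_n$ is concave on $(0,1)$ and extends continuously to the closed interval with $\psi_n(0)=\phi_n(-\sqrt{n})=0$. A standard consequence of concavity together with a zero at the left endpoint is that the map $t\mapsto\psi_n(t)/t$ is nonincreasing on $(0,1)$. Applied at $t\in(0,1/2]$ this yields
\begin{equation*}
\psi_n(t)\geq 2t\,\psi_n(1/2)=2t\,\phi_n(0)=2t\lambda_n,
\end{equation*}
since $\Phi_n^{-1}(1/2)=0$ by symmetry of $\phi_n$. The bound $\lambda_n\geq 1/\sqrt{4\pi}$ already noted in the excerpt then gives $\psi_n(t)\geq t/\sqrt{\pi}$ for every $t\in(0,1/2]$.

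Plugging this into the integral representation,
\begin{equation*}
\Phi_n^{-1}(b)-\Phi_n^{-1}(a)=\int_a^b\frac{dt}{\psi_n(t)}\leq\int_a^b\frac{\sqrt{\pi}}{t}\,dt=\sqrt{\pi}\ln(b/a),
\end{equation*}
and since $\Phi_n^{-1}$ is strictly increasing the absolute value on the left coincides with this difference. I do not foresee a genuine obstacle; the only care-demanding point is justifying monotonicity of $\psi_n(t)/t$ from concavity plus $\psi_n(0)=0$, which follows by writing $t=(1-t)\cdot 0+t\cdot 1$ with the appropriate rescaling, or equivalently from the fact that for any concave $\psi$ with $\psi(0)\geq 0$ the chord slope from $0$ is decreasing.
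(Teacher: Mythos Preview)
Your proof is correct and follows essentially the same approach as the paper: both derive the key lower bound $\psi_n(t)\geq 2\lambda_n t\geq t/\sqrt{\pi}$ on $(0,1/2)$ from concavity of $\psi_n$ and then integrate. The only cosmetic difference is that the paper phrases the final step via the substitution $h_n(x)=\Phi_n^{-1}(e^{-x})$ and shows $h_n$ is $\sqrt{\pi}$-Lipschitz on $(\ln 2,\infty)$, which amounts to the same integral after the change of variables $t=e^{-x}$.
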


\begin{proof}
Consider $\psi _{n}$ as in Lemma \ref{convexity} and define $h_{n}:(0,\infty
)\rightarrow \mathbb{R}$ by $h_{n}(x)=\Phi _{n}^{-1}\left( e^{-x}\right) $.
Note that $\psi _{n}$ is positive, concave, symmetric about $x=1/2$, and%
\begin{equation*}
\lim_{x\rightarrow 0}\psi _{n}(x)=\lim_{x\rightarrow 1}\psi _{n}(x)=0
\end{equation*}%
Therefore $\psi (x)\geq 2\phi _{n}(0)x=2\lambda _{n}x\geq x/\sqrt{\pi }$ for
all $x\in (0,1/2)$. By the inverse function theorem,%
\begin{equation*}
h_{n}^{\prime }(x)=\frac{-e^{-x}}{\phi _{n}\left( \Phi
_{n}^{-1}(e^{-x})\right) }
\end{equation*}%
and $\left\vert h_{n}^{\prime }(x)\right\vert \leq \sqrt{\pi }$ for all $%
x>\ln (2)$. Hence $h_{n}$ is $\sqrt{\pi }$-Lipschitz on $(\ln (2),\infty )$
and the result follows.
\end{proof}

\begin{lemma}
\label{vol of ball}There exists a sequence $(\omega _{n})_{1}^{\infty }$
with $0<\omega _{n}<1$ and $\lim_{n\rightarrow \infty }\omega _{n}=1$ such
that for all $n\in \mathbb{N}$,%
\begin{equation*}
\mathrm{vol}_{n}(B_{2}^{n})=\left( \sqrt{\frac{2\pi e\omega _{n}}{n}}\right)
^{n}
\end{equation*}
\end{lemma}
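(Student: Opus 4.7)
The plan is to solve for $\omega_n$ explicitly from the claimed identity and then verify its three required properties in turn. Writing $V_n = \mathrm{vol}_n(B_2^n) = \pi^{n/2}/\Gamma(1+n/2)$, which is the formula already cited in the excerpt, I would simply define
\[
\omega_n = \frac{n}{2\pi e}\,V_n^{2/n} = \frac{n}{2e\,\Gamma(1+n/2)^{2/n}},
\]
so that the displayed identity $V_n = (2\pi e \omega_n/n)^{n/2}$ holds by construction. Positivity $\omega_n > 0$ is then immediate from $V_n > 0$.

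Next I would establish the upper bound $\omega_n < 1$, which after rearranging is equivalent to the inequality
\[
\Gamma(1+n/2) > \left(\frac{n}{2e}\right)^{n/2} \quad \text{for all } n \in \mathbb{N}.
\]
I would derive this from the standard sharp lower bound in Stirling's inequality, $\Gamma(x+1) \geq \sqrt{2\pi x}\,(x/e)^x$ for all $x > 0$, which follows from Binet's formula (or from log-convexity of $\Gamma$ together with the integral representation $\ln \Gamma(x+1) = \int_0^\infty\cdots$). Specialising to $x = n/2$ produces an extra factor of $\sqrt{\pi n} \geq \sqrt{\pi} > 1$, giving the strict inequality for every $n \geq 1$. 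The limit $\omega_n \to 1$ then follows from the refined Stirling asymptotic $\Gamma(x+1) = \sqrt{2\pi x}(x/e)^x(1 + O(1/x))$: raising to the $2/n$ power with $x = n/2$ yields $\Gamma(1+n/2)^{2/n} = (n/(2e))(1 + o(1))$ and hence $\omega_n = 1 + o(1)$.

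There is no significant obstacle; the only point requiring a little care is that $\omega_n < 1$ must hold for every $n$, not merely asymptotically, which is precisely why I invoke the version of Stirling's inequality with the explicit $\sqrt{2\pi x}$ prefactor rather than just the leading-order asymptotic. Everything else reduces to algebraic manipulation of the $\Gamma$-function identity for $V_n$.
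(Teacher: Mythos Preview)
Your proof is correct and follows essentially the same route as the paper: the paper's one-line argument invokes the formula $\mathrm{vol}_n(B_2^n)=\pi^{n/2}/\Gamma(1+n/2)$ together with Stirling's approximation, and you have simply written out the details of that invocation, including the care needed to get the strict inequality $\omega_n<1$ for every $n$.
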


\begin{proof}
This follows from the expression $\mathrm{vol}_{n}(B_{2}^{n})=\pi
^{n/2}\left( \Gamma (1+n/2)\right) ^{-1}$ and Sterling's approximation. See
for example Corollary 2.20 in \cite{Kol}.
\end{proof}

The following bound is well known, without dependence on $n$.

\begin{lemma}
\label{mult Gaussian decay}Let $X$ be a random vector in $\mathbb{R}^{n}$
with the standard multivariate normal distribution. Then for all $t\geq 2%
\sqrt{n}$,%
\begin{equation*}
\frac{1}{2}\leq \frac{\mathbb{P}\left\{ \left\vert X\right\vert >t\right\} }{%
n\mathrm{vol}_{n}(B_{2}^{n})(2\pi )^{-n/2}t^{n-2}\exp \left( -t^{2}/2\right) 
}\leq 4/3
\end{equation*}
\end{lemma}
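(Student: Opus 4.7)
The plan is to reduce everything to a one-dimensional integral via spherical coordinates and then estimate it by one integration by parts. Since the density of $X$ is $(2\pi)^{-n/2}\exp(-|x|^{2}/2)$ and the surface measure of the sphere of radius $r$ in $\mathbb{R}^{n}$ is $n\mathrm{vol}_{n}(B_{2}^{n})r^{n-1}$, I would first write
\begin{equation*}
\mathbb{P}\{|X|>t\}=\frac{n\mathrm{vol}_{n}(B_{2}^{n})}{(2\pi)^{n/2}}\int_{t}^{\infty}r^{n-1}e^{-r^{2}/2}dr,
\end{equation*}
so that the quantity to be estimated is reduced to the dimensionless ratio
\begin{equation*}
R(t)=\frac{\int_{t}^{\infty}r^{n-1}e^{-r^{2}/2}dr}{t^{n-2}e^{-t^{2}/2}}.
\end{equation*}

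Next I would integrate by parts using $u=r^{n-2}$, $dv=re^{-r^{2}/2}dr$ (so $v=-e^{-r^{2}/2}$) to get
\begin{equation*}
\int_{t}^{\infty}r^{n-1}e^{-r^{2}/2}dr=t^{n-2}e^{-t^{2}/2}+(n-2)\int_{t}^{\infty}r^{n-3}e^{-r^{2}/2}dr.
\end{equation*}
The lower bound is immediate: the tail integral on the right is nonnegative, so $R(t)\geq 1\geq 1/2$. For the upper bound, I would use the hypothesis $t\geq 2\sqrt{n}$ through the pointwise estimate $r^{n-3}=r^{n-1}/r^{2}\leq r^{n-1}/t^{2}$ valid for $r\geq t$, which yields
\begin{equation*}
\left(1-\frac{n-2}{t^{2}}\right)\int_{t}^{\infty}r^{n-1}e^{-r^{2}/2}dr\leq t^{n-2}e^{-t^{2}/2}.
\end{equation*}
Since $t\geq 2\sqrt{n}$ forces $(n-2)/t^{2}\leq 1/4$, one obtains $R(t)\leq 4/3$, completing the proof.

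There is no real obstacle here; the only subtle point is that the integration-by-parts bootstrap only closes when $(n-2)/t^{2}$ is bounded away from $1$, which is exactly what the assumption $t\geq 2\sqrt{n}$ buys us (and also explains why the constant $2$ in front of $\sqrt{n}$ translates into the specific constants $1/2$ and $4/3$ in the statement — both are slack, and any $t\geq(1+\eta)\sqrt{n}$ with $\eta>0$ would yield comparable bounds).
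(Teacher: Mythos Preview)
Your argument is correct (for $n\geq 2$, which is all the paper ever needs since $n\geq 6$ throughout) and in fact gives the sharper lower bound $R(t)\geq 1$ rather than $1/2$. The route, however, differs from the paper's. The paper obtains the lower bound by pulling out $t^{n-1}$ and invoking the one-dimensional Gaussian tail estimate (the limiting case of Lemma~\ref{normal tails}), and obtains the upper bound by the convexity inequality $x^{2}/2\geq tx-t^{2}/2$, the substitution $u=tx-t^{2}$, and $(1+u/t^{2})^{n-1}\leq e^{(n-1)u/t^{2}}\leq e^{u/4}$, which produces $\int_{0}^{\infty}e^{-3u/4}\,du=4/3$. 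Your single integration by parts plus the self-referential bound $r^{n-3}\leq r^{n-1}/t^{2}$ is more self-contained: it avoids the appeal to Lemma~\ref{normal tails} and the exponential substitution, and yields both inequalities at once with exactly the same constants. The paper's approach has the minor advantage of covering $n=1$ uniformly; yours would need a one-line separate check there, since for $n=1$ the term $(n-2)\int r^{n-3}e^{-r^{2}/2}dr$ is negative and the lower bound $R(t)\geq 1$ fails (though $R(t)\geq 1/2$ still holds).
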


\begin{proof}
By polar integration with respect to surface area measure on $S^{n-1}$ with $%
\mu _{n}(S^{n-1})=n\mathrm{vol}_{n}(B_{2}^{n})$, 
\begin{eqnarray*}
\mathbb{P}\left\{ \left\vert X\right\vert >t\right\} &=&\int_{t}^{\infty
}\int_{S^{n-1}}x^{n-1}(2\pi )^{-n/2}\exp (-x^{2}/2)d\mu _{n}(\omega )dx \\
&=&n(2\pi )^{-n/2}\mathrm{vol}_{n}(B_{2}^{n})\int_{t}^{\infty }x^{n-1}\exp
(-x^{2}/2)dx
\end{eqnarray*}%
On one hand, by the limiting case of Lemma \ref{normal tails}, 
\begin{equation*}
\int_{t}^{\infty }x^{n-1}\exp (-x^{2}/2)dx\geq t^{n-1}\int_{t}^{\infty }\exp
(-x^{2}/2)dx\geq \frac{1}{2}t^{n-2}\exp (-t^{2}/2)
\end{equation*}%
By convexity on the other hand (or basic algebra), $tx-t^{2}/2\leq x^{2}/2$
for all $x\in \mathbb{R}$. Using this together with the substitution $%
u=tx-t^{2}$, the inequality $1+z\leq \exp (z)$ valid for all $z\in \mathbb{R}
$, and the fact that (by assumption) $n/t^{2}\leq 1/4$,%
\begin{eqnarray*}
\int_{t}^{\infty }x^{n-1}\exp (-x^{2}/2)dx &\leq &\exp
(t^{2}/2)\int_{t}^{\infty }x^{n-1}\exp (-tx)dx \\
&\leq &t^{n-2}\exp \left( -t^{2}/2\right) \int_{0}^{\infty }\exp \left(
-3u/4\right) du
\end{eqnarray*}
\end{proof}

We shall also make use of the following classical result.

\begin{theorem}[Hoeffding \protect\cite{Hoef}]
Let $(\gamma _{i})_{1}^{n}$ be independent random variables with $a_{i}\leq
\gamma _{i}\leq b_{i}$. Then for all $t>0$,%
\begin{equation*}
\mathbb{P}\left\{ \left\vert \sum_{i=1}^{n}\gamma _{i}-\mathbb{E}%
\sum_{i=1}^{n}\gamma _{i}\right\vert \geq t\right\} \leq 2\exp \left( \frac{%
-2t^{2}}{\sum_{i=1}^{n}(b_{i}-a_{i})^{2}}\right)
\end{equation*}
\end{theorem}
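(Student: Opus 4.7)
By positive homogeneity of $\|\cdot\|$ and $|\cdot|$ on both sides, it suffices to prove the two-sided inequality for unit vectors $\theta\in S^{n-1}$, i.e.~show $(1-K\varepsilon)M\le\|T\theta\|\le(1+K\varepsilon)M$ uniformly in $\theta$. Writing $(\theta_i)_{i=1}^{N}$ for the rows of $T$, the coordinates of $T\theta$ are $y_i(\theta)=\langle\theta,\theta_i\rangle$, and because the basis is invariant under permutations, $\|T\theta\|$ depends only on the multiset $\{y_i(\theta)\}$, equivalently on the sorted sequence, equivalently on the empirical measure $\mu_\theta=N^{-1}\sum_i\delta(y_i(\theta))$. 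The reference vector $v$ from (\ref{v def}) should be read (based on the construction and the sketch) as a canonical quantile vector whose sorted entries approximate $\Phi_n^{-1}((i-\tfrac12)/N)$, so $M=\|v\|$ represents the ``ideal'' value of $\|T\theta\|$ if the rows were genuinely uniform on $\sqrt n S^{n-1}$. The plan is to show (a) $\mu_\theta$ is uniformly close to $\Phi_n$ over $\theta\in S^{n-1}$, (b) translate this closeness into an $\ell_\infty$ bound between the sorted coordinates of $T\theta$ and of $v$, and (c) convert that into a norm bound using the basis constant $K$. The remark about $k<6$ then reduces to the $n=6$ case applied to $\widetilde T x=T(x,0,\ldots,0)$.

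For (a), the construction is designed so that $\mu$ is a deterministic quadrature for the standard $n$-dimensional Gaussian truncated to $\alpha\sqrt n B_2^n$, pushed forward to $\sqrt n S^{n-1}$ via $x\mapsto\sqrt n x/|x|$; by rotational invariance of the Gaussian, its projection onto any direction $\theta$ has law $\Phi_n$, independent of $\theta$. I would estimate three sources of error in the approximation $\mu_\theta\approx\Phi_n$: (i) truncation mass outside $\alpha\sqrt n B_2^n$, bounded by Lemma \ref{mult Gaussian decay} at $t=\alpha\sqrt n$ and made $\ll\varepsilon/N$ by the choice $\alpha=2\delta^{-4}(\log\delta^{-1})^{1/2}$; (ii) rounding from the floor in $m(x)$ and the $N-N'$ correction at the origin, contributing at most one count per occupied lattice cell; (iii) cell-quadrature error between $\prod_i(\Phi((x_i+\tfrac12)/\sigma)-\Phi((x_i-\tfrac12)/\sigma))$ and the Gaussian value at the cell center, controlled by the smoothness of the density at scale $\sigma=\delta^{-4}$. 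Combined with Lemma \ref{Lip Phi} (transferring CDF closeness to quantile closeness) and Lemma \ref{normal tails} (for the extreme quantiles near $\pm\alpha\sqrt n$), this yields a pointwise $\ell_\infty$ bound between the sorted $y_i(\theta)$ and the sorted entries of $v$, of order $\varepsilon$ in the bulk.

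The principal obstacle is upgrading this pointwise estimate to one that is \emph{uniform} over $\theta\in S^{n-1}$. I would use a standard $\delta_0$-net argument: take a net $\mathcal N\subset S^{n-1}$ of cardinality at most $(3/\delta_0)^n$, which is affordable because the hypothesis $n\le c(\log\varepsilon^{-1})^{-1}\log N$ keeps $|\mathcal N|$ subpolynomial in $N$ when $\delta_0$ is a suitable polynomial in $\varepsilon$. For each $\theta\in\mathcal N$ and each threshold $t$ on a discrete grid, Hoeffding's inequality is used to control the deviation of the count $\#\{i:y_i(\theta)\le t\}$ from its designed target $N\Phi_n(t)$, each row contributing a bounded $[0,1]$ summand; the double union bound over $\mathcal N$ and the thresholds costs only logarithmic factors in $\varepsilon^{-1}$ and $N$. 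For $\theta\notin\mathcal N$, pick $\theta'\in\mathcal N$ with $|\theta-\theta'|<\delta_0$: each $y_i$ shifts by at most $\alpha\sqrt n\,\delta_0$, which is $o(\varepsilon)$ by the parameter choices, and the Lipschitz bound on $\Phi_n^{-1}$ from Lemma \ref{Lip Phi} ensures the quantile estimate survives the passage from the net to the whole sphere.

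Finally, for step (c), having obtained a sorted $\ell_\infty$ bound $\max_i|y_{(i)}(\theta)-v_{\pi(i)}|\le\varepsilon'$ for an appropriate permutation $\pi$, I would decompose $T\theta-v^\pi$ coordinate-wise along blocks of consecutive sorted rank and bound each block in norm using the basis constant $K$, which absorbs a single factor of $K$ into the final accuracy $K\varepsilon$. The constant $\delta=\varepsilon/1429$ in the statement is presumably tuned so that the accumulated errors from (i)--(iii), the net argument, and the partial-sum decomposition sum to at most $\varepsilon$; the value $c=1/100$ then emerges from the requirement $(3/\delta_0)^n\le N^{1/2}$ or similar, traced through the various polynomial dependences on $\varepsilon$.
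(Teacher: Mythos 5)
The statement you were asked to prove is Hoeffding's inequality itself -- a classical tail bound for sums of independent bounded random variables, quoted in the paper from \cite{Hoef} without proof -- but what you have written is a proof sketch of Theorem \ref{Main result}, the paper's main embedding theorem. Nothing in your proposal proves, or even addresses, the bound
$\mathbb{P}\{|\sum\gamma_i-\mathbb{E}\sum\gamma_i|\ge t\}\le 2\exp(-2t^2/\sum(b_i-a_i)^2)$.
Instead you treat Hoeffding as a black-box tool within a net-and-union-bound argument for the main theorem. That is a fundamental mismatch: you have answered the wrong question, and the argument you give, whatever its merits as a sketch of Theorem \ref{Main result}, cannot be evaluated as a proof of Hoeffding's inequality.

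A proof of the actual statement would proceed along entirely different lines: first establish Hoeffding's lemma, that a mean-zero random variable $\gamma$ with $a\le\gamma\le b$ satisfies $\mathbb{E}\,e^{s\gamma}\le\exp(s^2(b-a)^2/8)$ for all $s\in\mathbb{R}$ (this is the real content, proved by bounding $e^{s\gamma}$ by the chord of the convex function $e^{s\cdot}$ over $[a,b]$ and then maximizing the resulting log-moment function); then apply Markov's inequality to $e^{s(\sum\gamma_i-\mathbb{E}\sum\gamma_i)}$, use independence to factor the expectation, optimize over $s>0$ to obtain the one-sided bound, and double it for the two-sided bound. None of this machinery appears in your proposal.

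As a secondary point, even read as a sketch of Theorem \ref{Main result} your proposal misdescribes how Hoeffding's inequality enters the paper. The matrix $T$ is deterministic, so there is no $\varepsilon$-net argument and no union bound over $\theta$ or over a grid of thresholds $t$; the paper's estimates (Lemmas \ref{first estimate on Y}--\ref{main approx of quantiles}) hold for every fixed $\theta\in S^{n-1}$ directly. Hoeffding is invoked only in the proof of Lemma \ref{first estimate on Y}, and there its role is to control the inner product $\sum_i\theta_i Z_i$ of $\theta$ with the vector of rounding errors $Z_i=[\sigma X_i]-\sigma X_i\in[-1/2,1/2]$, not to control empirical CDF counts over a random sample.
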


\section{\label{mn pf}Main proof}

Consider the cumulative distribution function%
\begin{equation*}
F_{\theta }(t)=\mu _{\theta }\left( (-\infty ,t]\right) =\frac{1}{N}%
\left\vert \left\{ i\leq N:\left\langle \theta ,\theta _{i}\right\rangle
\leq t\right\} \right\vert 
\end{equation*}%
and the quantile function $F_{\theta }^{-1}:(0,1)\rightarrow \mathbb{R}$
defined by%
\begin{equation*}
F_{\theta }^{-1}(s)=\inf \left\{ t\in \mathbb{R}:F_{\theta }(t)\geq
s\right\} =\sup \left\{ t\in \mathbb{R}:F_{\theta }(t)<s\right\} 
\end{equation*}%
This is well defined even though $F_{\theta }$ is not injective and does not
have an inverse in the classical sense. Set $a=\Phi _{n}\left( 1.5\right) $
and $b=\Phi _{n}((1-17\delta )\sqrt{n})$. In Section \ref{analysis pt cloud}
we prove the following lemma.

\begin{lemma}
\label{main approx of quantiles}Consider any $\theta \in S^{n-1}$. Then,%
\begin{equation*}
\left\vert F_{\theta }^{-1}(s)-\Phi _{n}^{-1}(s)\right\vert \leq \left\{ 
\begin{array}{ccc}
20\delta \Phi _{n}{}^{-1}(s) & : & (1-b)\leq s\leq (1-a) \\ 
7\delta  & : & (1-a)<s<a \\ 
20\delta \Phi _{n}{}^{-1}(s) & : & a\leq s\leq b%
\end{array}%
\right. 
\end{equation*}%
and for $s\in (0,1-b)\cup (b,1)$, $\left\vert F_{\theta }^{-1}(s)-\sqrt{n}%
\right\vert \leq 29\delta \sqrt{n}$.
\end{lemma}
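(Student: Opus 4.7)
The plan is to compare the discrete CDF $F_{\theta}$ with $\Phi_{n}$ at the level of the cumulative functions, and then invert the comparison using Lemma \ref{Lip Phi}. The bridge between the two is a continuous Gaussian model: let $Y\sim N(0,\sigma^{2}I_{n})$. Since the radial projection $\sqrt{n}Y/|Y|$ is scale-invariant and uniform on $\sqrt{n}S^{n-1}$ by rotational invariance of $Y$, the projection $\xi := \sqrt{n}\langle\theta,Y\rangle/|Y|$ is distributed according to $\Phi_{n}$. The rounded lattice point $\tilde{Y}=R(Y)\in\mathbb{Z}^{n}$ then has probability mass $\prod_{i}(\Phi((x_{i}+1/2)/\sigma)-\Phi((x_{i}-1/2)/\sigma))$, which is precisely $m(x)/N$ up to a floor error of $1/N$ per lattice point.

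First I would bound $|F_{\theta}(t)-\Phi_{n}(t)|$ uniformly by decomposing the error into three pieces. The floor error in the definition of $m(x)$ contributes at most $(\text{number of lattice points in }\alpha\sqrt{n}B_{2}^{n})/N$; since this count is of order $(c\alpha\sqrt{n})^{n}$ and the hypothesis $n\leq c(\log\varepsilon^{-1})^{-1}\log N$ forces $\log N\gtrsim n\log\delta^{-1}$, this error is $O(\delta)$. The truncation error is $\mathbb{P}(|\tilde{Y}|>\alpha\sqrt{n})\leq \mathbb{P}(|X|>2\sqrt{n\log\delta^{-1}})$ where $X=Y/\sigma$, and Lemma \ref{mult Gaussian decay} gives a bound of order $\delta^{2n}$, negligible. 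Finally, the angular discretization: $|\tilde{Y}/|\tilde{Y}|-Y/|Y||\leq 2|\tilde{Y}-Y|/|Y|\leq \sqrt{n}/|Y|$, so on the high-probability event $\{|Y|\geq\sigma\sqrt{n}/2\}$ we obtain $|\sqrt{n}\langle\theta,\tilde{Y}\rangle/|\tilde{Y}|-\xi|\leq 2\sqrt{n}\delta^{4}$, which is $\ll\delta$.

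Next I would invert these estimates into bounds on the quantile functions, handling each of the three quantile regimes separately. For $s$ in the bulk $(1-a,a)$, the density $\phi_{n}$ is bounded below on $[-1.5,1.5]$ (using its explicit formula together with $\lambda_{n}\geq 1/\sqrt{4\pi}$), so an additive CDF error of $O(\delta)$ yields an additive quantile error of $O(\delta)$ by the mean value theorem, giving the bound $7\delta$. For $s\in(1-b,1-a]\cup[a,b)$, I would apply Lemma \ref{Lip Phi}, together with the symmetry $\Phi_{n}^{-1}(1-s)=-\Phi_{n}^{-1}(s)$, to obtain $|\Phi_{n}^{-1}(s\pm\eta)-\Phi_{n}^{-1}(s)|\leq\sqrt{\pi}\log((1-s)/(1-s\mp\eta))$. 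The key point is to refine the CDF estimate in step two to a tail-friendly form $|F_{\theta}(t)-\Phi_{n}(t)|\lesssim\delta(1-\Phi_{n}(|t|))$ in this range, by noting that the floor error only accumulates over lattice points landing in the cap $\{x:\sqrt{n}\langle\theta,x\rangle/|x|>t\}$, whose natural relative size is $1-\Phi_{n}(t)$. Then Lemma \ref{normal tails} lets us translate $\delta(1-\Phi_{n}(t))/(1-\Phi_{n}(t))=O(\delta)$ and $\log(1+\delta)\leq\delta$ into an $O(\delta\cdot t)=O(\delta\Phi_{n}^{-1}(s))$ quantile estimate, yielding the multiplicative bound $20\delta\Phi_{n}^{-1}(s)$. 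For the extreme tail $s\in(b,1)$ (and symmetrically near $0$), both $\Phi_{n}^{-1}(s)$ and $F_{\theta}^{-1}(s)$ lie in the thin shell $[(1-17\delta)\sqrt{n},\sqrt{n}]$ because all lattice vectors $\sqrt{n}x/|x|$ have length exactly $\sqrt{n}$, which gives the residual bound $29\delta\sqrt{n}$.

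The principal obstacle is the intermediate tail regime: a uniform additive bound $|F_{\theta}-\Phi_{n}|\leq c\delta$ is too weak, since for $s$ close to $1$ it could exceed $1-s$ itself. The proof must therefore carefully track the cap-localization of the floor and coupling errors to extract the sharper relative form $|F_{\theta}(t)-\Phi_{n}(t)|\lesssim\delta(1-\Phi_{n}(t))$. Interfacing this with Lemma \ref{Lip Phi} and the two-sided tail bound from Lemma \ref{normal tails}, so that the logarithmic factor in the Lipschitz estimate lands cleanly on the scale $\Phi_{n}^{-1}(s)$, is the technical heart of the argument.
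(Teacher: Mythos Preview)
Your overall architecture matches the paper's: establish CDF bounds relating $F_\theta$ to $\Phi_n$ (additive in the bulk, multiplicative in the tail), then invert via Lemma~\ref{Lip Phi}. The three-case split and the use of the support constraint $|\theta_i|\le\sqrt{n}$ for the extreme tail are exactly right.

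However, your treatment of the angular discretization error has a real gap. You bound $\bigl|\sqrt{n}\langle\theta,\tilde Y\rangle/|\tilde Y|-\xi\bigr|\le 2\sqrt{n}\,\delta^{4}$ by applying Cauchy--Schwarz to $\langle\theta,\tilde Y/|\tilde Y|-Y/|Y|\rangle$, and then assert this is $\ll\delta$. But the hypotheses place no upper bound on $n$ in terms of $\delta$; nothing prevents $\sqrt{n}\,\delta^{3}$ from being large. More importantly, even when this worst-case coupling bound is small, it produces only an \emph{additive} shift $F_\theta(t)\in[\Phi_n(t-\eta)-p,\Phi_n(t+\eta)+p]$, where $p$ is the probability of the bad event $\{|Y|<\sigma\sqrt{n}/2\}$. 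To obtain the required \emph{relative} tail estimate $|1-F_\theta(t)-(1-\Phi_n(t))|\lesssim\delta(1-\Phi_n(t))$ up to $t=(1-17\delta)\sqrt{n}$, you would need $p\lesssim\delta(1-\Phi_n(t))$ at that endpoint, where $1-\Phi_n(t)$ is of order $\delta^{(n-1)/2}$; the event $\{|X|<\sqrt{n}/2\}$ has probability only $e^{-cn}$ for a fixed $c$, which is far too large. The paper avoids this by applying Hoeffding's inequality directly to $\langle\theta,Z\rangle=\sum_i\theta_iZ_i$ with $|Z_i|\le 1/2$, obtaining $\mathbb{P}\{|\langle\theta,Z\rangle|>\delta^{-2}t/2\}\le 2\exp(-\delta^{-4}t^{2}/2)$, and then checks explicitly (this is the two-case analysis in the proof of Lemma~\ref{first estimate on Y}) that this Hoeffding tail is dominated by $\delta(1-\Phi_n((1+3\delta)t))$ throughout $1\le t\le(1-3\delta)\sqrt{n}$. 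This probabilistic control of the rounding error in the direction $\theta$ is the missing ingredient.

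Your handling of the floor error in the tail is also weaker than what is needed. Saying that the floor error ``accumulates only over lattice points in the cap'' and that the cap has ``natural relative size $1-\Phi_n(t)$'' conflates Gaussian measure with lattice-point count. The paper instead observes that the lower bound on $N$ forces $N\,\mathbb{P}\{Y=x\}\ge\sigma\delta^{-1}$ for every $x\in\mathbb{Z}^n\cap\alpha\sqrt{n}B_2^n$, so the floor error of $1$ at each point is at most $\delta\sigma^{-1}\cdot N\,\mathbb{P}\{Y=x\}$; summing over any cap immediately gives the multiplicative bound $(1\pm\delta\sigma^{-1})$ on the tail. This per-point relative estimate is what makes the floor error negligible uniformly over all caps.
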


\begin{proof}[Proof of Theorem \protect\ref{Main result}]
Consider any $\theta \in S^{n-1}$. Let $(u_{i})_{1}^{N}$ be the order
statistics (non-decreasing rearrangement) of the sequence $\left(
\left\langle \theta ,\theta _{i}\right\rangle \right) _{1}^{N}$. Note that
for all $(i-1)/N<s\leq i/N$, $F_{\theta }^{-1}\left( s\right) =u_{i}$.
Define $v\in \mathbb{R}^{N}$ by%
\begin{equation}
v_{i}=\left\{ 
\begin{array}{ccc}
-\sqrt{n} & : & 0\leq i-1/2<(1-b)N \\ 
\Phi _{n}{}^{-1}((i-1/2)/N) & : & (1-b)N\leq i-1/2\leq bN \\ 
\sqrt{n} & : & bN<i-1/2\leq N%
\end{array}%
\right.   \label{v def}
\end{equation}%
By Lemma \ref{main approx of quantiles},%
\begin{equation*}
\left\vert u_{i}-v_{i}\right\vert \leq \left\{ 
\begin{array}{ccc}
29\delta \left\vert v_{i}\right\vert  & : & 0\leq i-1/2\leq (1-a)N \\ 
7\delta  & : & (1-a)N<i-1/2<aN \\ 
29\delta \left\vert v_{i}\right\vert  & : & aN\leq i-1/2\leq N%
\end{array}%
\right. 
\end{equation*}%
By the triangle inequality,%
\begin{equation*}
\left\vert (\left\Vert u\right\Vert -\left\Vert v\right\Vert )\right\vert
\leq \left\Vert u-v\right\Vert 
\end{equation*}%
Define $w,y\in \mathbb{R}^{N}$ by%
\begin{equation*}
w_{i}=\left\{ 
\begin{array}{ccc}
u_{i}-v_{i} & : & 0\leq i-1/2<(1-a)N \\ 
0 & : & (1-a)N\leq i-1/2\leq aN \\ 
u_{i}-v_{i} & : & aN<i-1/2\leq N%
\end{array}%
\right. 
\end{equation*}%
\begin{equation*}
y_{i}=\left\{ 
\begin{array}{ccc}
0 & : & 0\leq i-1/2<(1-a)N \\ 
u_{i}-v_{i} & : & (1-a)N\leq i-1/2\leq aN \\ 
0 & : & aN<i-1/2\leq N%
\end{array}%
\right. 
\end{equation*}%
It follows from this definition that $y$ has at most $(2a-1)N+1$ nonzero
coordinates. Let $y^{\prime }\in \mathbb{R}^{N}$ be defined by%
\begin{equation*}
y_{i}^{\prime }=y_{\rho (i)}
\end{equation*}%
where $\rho \in S_{N}$ is any permutation such that if $y_{i}^{\prime }=0$
and $\left\vert i-(N+1)/2\right\vert >\left\vert j-(N+1)/2\right\vert $,
then $y_{j}^{\prime }=0$ i.e. we have moved the nonzero coordinates of $y$
to the right and left hand tails of the vector (as a string of coordinates)
and placed the zero's in the middle. Then for all $i$, if $y_{i}^{\prime
}\neq 0$ then $\left\vert i-(N+1)/2\right\vert >(1-a)N/2$ so $\left\vert
v_{i}\right\vert \geq \Phi _{n}{}^{-1}\left( 0.5+(1-a)/2\right) >5\times
10^{-3}$ and%
\begin{equation*}
\left\vert y_{i}^{\prime }\right\vert \leq 1400\delta \left\vert
v_{i}\right\vert 
\end{equation*}%
Here we have used the inequality $1-x\geq \exp (-2e^{2}x/(e^{2}-1))$ valid
for all $0\leq x\leq 1-e^{-2}$, which implies that%
\begin{equation*}
a=1-\lambda _{n}\int_{1.5}^{\sqrt{n}}\left( 1-\frac{t^{2}}{n}\right)
^{(n-3)/2}dt\leq 1-\frac{1}{\sqrt{4\pi }}\int_{1.5}^{2e^{-1}\sqrt{e^{2}-1}%
}\exp \left( -\frac{e^{2}t^{2}}{(e^{2}-1)}\right) dt\leq 0.996
\end{equation*}%
and%
\begin{equation*}
\Phi _{n}{}^{-1}\left( 0.5+(1-a)/2\right) \geq \Phi _{n}{}^{-1}\left(
0.502\right) \geq 2\times 10^{-3}/\lambda _{n}
\end{equation*}%
This implies that%
\begin{eqnarray*}
\left\Vert u-v\right\Vert  &=&\left\Vert w+y\right\Vert  \\
&\leq &\left\Vert w\right\Vert +\left\Vert y\right\Vert  \\
&\leq &29\delta K\left\Vert v\right\Vert +1400\delta K\left\Vert
v\right\Vert 
\end{eqnarray*}%
The result now follows with $M=\left\Vert v\right\Vert $.
\end{proof}

\section{\label{analysis pt cloud}Analysis of the point cloud}

Let $X=(X_{i})_{1}^{n}$ be a random vector in $\mathbb{R}^{n}$ with the
standard multivariate normal distribution (i.e. mean zero and identity
covariance matrix). Let $[t]$ denote the closest integer function of $t\in 
\mathbb{R}$, where we round closer to zero if $t$ is midway between two
integers. Let $Y=(Y_{i})_{1}^{n}$ where $Y_{i}=[\sigma X_{i}]$, and let $%
Z_{i}=Y_{i}-\sigma X_{i}$. Consider the random vector $Q$ defined by%
\begin{equation*}
Q=\sqrt{n}|Y|^{-1}Y\cdot 1_{\{Y\neq 0\}\cup \{\left\vert Y\right\vert \leq
\alpha \sqrt{n}\}}
\end{equation*}%
where $1_{\{Y\neq 0\}\cup \{\left\vert Y\right\vert \leq \alpha \sqrt{n}\}}$
is the indicator function of the event $\left\{ Y\neq 0\right\} \cup \left\{
\left\vert Y\right\vert \leq \alpha \sqrt{n}\right\} $. Let $\theta \in
S^{n-1}$ denote an arbitrary unit vector.

\begin{lemma}
\label{first estimate on Y}For all $1\leq t\leq (1-3\delta )\sqrt{n}$,%
\begin{equation}
(1-\delta )(1-\Phi _{n}((1+3\delta )t)\leq \mathbb{P}\left\{
\sum_{i=1}^{n}\theta _{i}Q_{i}>t\right\} \leq (1+\delta )(1-\Phi
_{n}((1-2\delta )t))  \label{distribution of Y}
\end{equation}
\end{lemma}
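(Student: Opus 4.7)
The plan is to couple $\langle \theta, Q\rangle = \sqrt{n}\langle \theta, Y\rangle/|Y|$ with $S := \sqrt{n}\langle \theta, X\rangle/|X|$, whose distribution function is exactly $\Phi_n$ by the rotational invariance of the Gaussian. Since $Y = \sigma X + Z$ with $\|Z\|_\infty \leq 1/2$ and $\sigma = \delta^{-4}$ large, the vector $Y$ is very close to parallel to $\sigma X$ on a high-probability event, so $\langle \theta, Q\rangle$ will agree with $S$ up to multiplicative error of order $\delta^2$ and a small additive error.

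First I would define a good event $A$ on which: (i) $|X| \geq \sqrt{n}\,\delta^2$; (ii) $|\langle \theta, Z\rangle| \leq C\sqrt{\log\delta^{-1}}$ for an appropriate constant $C$; and (iii) $|X| \leq (2\sqrt{\log\delta^{-1}}-\delta^4/2)\sqrt{n}$, which via $|Y| \leq \sigma|X| + |Z|$ forces $|Y| \leq \alpha\sqrt{n}$ so that $Q = \sqrt{n}Y/|Y|$ and not zero. The three failure probabilities are controlled respectively by an incomplete-gamma lower-tail bound for the chi distribution, by Hoeffding applied to the independent mean-zero bounded variables $\theta_i Z_i$ (mean zero because the symmetric rounding convention $[-t]=-[t]$ makes $Z_i$ an odd function of $X_i$), and by Lemma \ref{mult Gaussian decay}. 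On $A$ one expands
\begin{equation*}
|Y|^2 = \sigma^2|X|^2 + 2\sigma\langle X,Z\rangle + |Z|^2,
\end{equation*}
uses Cauchy--Schwarz $|\langle X,Z\rangle|\leq |X||Z|$ with $|Z|\leq\sqrt{n}/2$ together with (i) to get $\sigma|X|/|Y| = 1 + O(\delta^2)$, and then writes
\begin{equation*}
\langle\theta,Q\rangle \;=\; S \cdot \frac{\sigma|X|}{|Y|} + \frac{\sqrt{n}\,\langle\theta,Z\rangle}{|Y|} \;=\; (1+O(\delta^2))\,S + O(\delta^2\sqrt{\log\delta^{-1}}).
\end{equation*}
For $t\geq 1$ this yields the implications $\{\langle\theta,Q\rangle>t\}\cap A\subset\{S>(1-2\delta)t\}$ and $\{S>(1+3\delta)t\}\cap A\subset\{\langle\theta,Q\rangle>t\}$, and combining these with $\mathbb{P}\{S>s\}=1-\Phi_n(s)$ and the bound on $\mathbb{P}(A^c)$ gives both directions of (\ref{distribution of Y}).

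The main obstacle will be verifying $\mathbb{P}(A^c) \leq \delta\bigl(1-\Phi_n((1+3\delta)t)\bigr)$ uniformly over $1\leq t\leq (1-3\delta)\sqrt{n}$. The right-hand side is tightest at the upper endpoint, where Lemma \ref{normal tails} gives a lower bound of order $\lambda_n(18\delta^2)^{(n-1)/2}/\sqrt{n}$. Each term of $\mathbb{P}(A^c)$ must beat this; calibrating the Hoeffding cutoff $C$ of order $\sqrt{n}$ (so the Hoeffding tail $2\delta^{2C^2}$ is at most $\delta^n$ up to benign factors) and exploiting both the hypothesis $n\geq 6$ and the high power $\sigma=\delta^{-4}$ (so that the incomplete-gamma estimate $(e\delta^4/2)^{n/2}/\sqrt{\pi n}$ decays in $\delta$ faster than the target) are the points where care is needed.
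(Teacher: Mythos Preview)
Your overall strategy---couple $\langle\theta,Q\rangle$ to $S=\sqrt{n}\langle\theta,X\rangle/|X|$ and control the discrepancy on a high-probability event---is exactly the paper's, and the ingredients (Hoeffding for $\langle\theta,Z\rangle$, a small-ball bound for $|X|$, Lemma~\ref{mult Gaussian decay} for large $|X|$) are the right ones. There is, however, a real gap in how you calibrate the Hoeffding cutoff.

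You first write the additive error as $O(\delta^{2}\sqrt{\log\delta^{-1}})$, which is correct only if $C=O(1)$: on your event $A$ one has $|Y|\geq \sigma\sqrt{n}\,\delta^{2}-\sqrt{n}/2\geq \tfrac12\sqrt{n}\,\delta^{-2}$, hence
\[
\frac{\sqrt{n}\,|\langle\theta,Z\rangle|}{|Y|}\ \leq\ \frac{\sqrt{n}\cdot C\sqrt{\log\delta^{-1}}}{\tfrac12\sqrt{n}\,\delta^{-2}}\ =\ 2C\,\delta^{2}\sqrt{\log\delta^{-1}}.
\]
But in the last paragraph you say $C$ must be ``of order $\sqrt{n}$'' so that the Hoeffding tail $2\delta^{2C^{2}}$ beats $\delta\bigl(1-\Phi_{n}((1+3\delta)t)\bigr)$ at the upper endpoint $t=(1-3\delta)\sqrt{n}$, where by Lemma~\ref{normal tails} the latter is only of order $\delta^{n}$ (times a factor like $18^{n/2}/\sqrt{n}$). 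These two requirements are incompatible: for the inclusion $\{S>(1+3\delta)t\}\cap A\subset\{\langle\theta,Q\rangle>t\}$ to hold at $t=1$ you need the additive error $2C\delta^{2}\sqrt{\log\delta^{-1}}$ to be at most a constant times $\delta$, forcing $C\leq c\,\delta^{-1}(\log\delta^{-1})^{-1/2}$; the tail requirement forces $C\geq c'\sqrt{n}$. Together these impose $n\leq c''\,\delta^{-2}/\log\delta^{-1}$, a constraint that is nowhere among the hypotheses (the lemma must hold for arbitrarily large $n$ once $\delta$ is fixed, since $N$ is unbounded above).

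The paper resolves this by making the Hoeffding threshold depend on $t$: it splits off the event $\bigl\{\bigl|\sum_{i}\theta_{i}Z_{i}\bigr|>\delta^{-2}t/2\bigr\}$, whose probability is at most $2\exp(-\delta^{-4}t^{2}/2)$ by Hoeffding. This decays in $t$ at least as fast as $1-\Phi_{n}(t)$ does (in fact much faster, since $\delta^{-4}\gg 1$), so the comparison with $\delta\bigl(1-\Phi_{n}((1+3\delta)t)\bigr)$ goes through at every $t$ in the range. And on the complementary event the resulting additive error is automatically at most $\delta t$, which is exactly the slack available at scale $t$. Replace your fixed good event $A$ by this $t$-dependent one and your argument goes through.
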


\begin{proof}
By the union bound,%
\begin{equation*}
\mathbb{P}\left\{ \sum_{i=1}^{n}\theta _{i}Q_{i}\leq t\right\} \leq \mathbb{P%
}\left\{ \sqrt{n}|Y|^{-1}\sum_{i=1}^{n}\theta _{i}Y_{i}\leq t\right\} +%
\mathbb{P}\left\{ Y=0\right\} +\mathbb{P}\left\{ |Y|>\alpha \sqrt{n}\right\} 
\end{equation*}%
By the triangle inequality $\left\vert |\sigma X|-|Y|\right\vert \leq
\left\vert Z\right\vert \leq \sqrt{n}/2$ and by the definitions $\alpha
=2\delta ^{-4}(\log \delta ^{-1})^{1/2}$ and $\sigma =\delta ^{-4}$, we have 
$(\alpha -1/2)/\sigma \geq 2$. By Lemma \ref{mult Gaussian decay} which
bounds $|X|$ and Lemma \ref{vol of ball}\ on $\mathrm{vol}_{n}(B_{2}^{n})$,
we have 
\begin{eqnarray*}
&&\mathbb{P}\left\{ |Y|>\alpha \sqrt{n}\right\}  \\
&\leq &\mathbb{P}\{|X|>\sigma ^{-1}(\alpha -1/2)\sqrt{n}\} \\
&\leq &2n\mathrm{vol}_{n}(B_{2}^{n})(2\pi )^{-n/2}(\sigma ^{-1}(\alpha -1/2)%
\sqrt{n})^{n-2}\exp \left( -1/2(\sigma ^{-1}(\alpha -1/2)\sqrt{n}%
)^{2}\right)  \\
&\leq &2e^{n/2}\sigma ^{-n+2}(\alpha -1/2)^{n-2}\exp (-1/2\sigma
^{-2}(\alpha -1/2)^{2}n) \\
&\leq &2^{n-1}e^{n/2}\left( \log \delta ^{-1}\right) ^{(n-2)/2}\delta
^{19n/10} \\
&\leq &e^{n/2}\delta ^{n}
\end{eqnarray*}%
Since $\left\Vert \phi \right\Vert _{\infty }\leq (2\pi )^{-1/2}$, $\mathbb{P%
}\left\{ Y=0\right\} \leq (2\pi )^{-n/2}\delta ^{4n}$. Using the union bound
again, as well as the equations $Y=\sigma X+Z$ and $\sigma =\delta ^{-4}$, 
\begin{eqnarray*}
&&\mathbb{P}\left\{ \frac{\sqrt{n}}{|Y|}\sum_{i=1}^{n}\theta _{i}Y_{i}\leq
t\right\}  \\
&\leq &\mathbb{P}\left\{ \frac{\left\vert \sigma X\right\vert }{\left\vert
Y\right\vert }\frac{\sqrt{n}}{|X|}\sum_{i=1}^{n}\theta _{i}X_{i}\leq
(1+\delta )t\right\} +\mathbb{P}\left\{ \frac{\left\vert \sigma X\right\vert 
}{\left\vert Y\right\vert }\frac{\sqrt{n}}{|\sigma X|}\left\vert
\sum_{i=1}^{n}\theta _{i}Z_{i}\right\vert >\delta t\right\}  \\
&\leq &\mathbb{P}\left\{ \frac{\sqrt{n}}{|X|}\sum_{i=1}^{n}\theta
_{i}X_{i}\leq (1+\delta )^{2}t\right\} +\mathbb{P}\left\{ \frac{\left\vert
\sigma X\right\vert }{\left\vert Y\right\vert }<(1+\delta )^{-1}\right\}  \\
&&+\mathbb{P}\left\{ \left\vert \sum_{i=1}^{n}\theta _{i}Z_{i}\right\vert
>\delta ^{-2}t/2\right\} +\mathbb{P}\left\{ \frac{\left\vert \sigma
X\right\vert }{\left\vert Y\right\vert }>(1-\delta )^{-1}\right\} +\mathbb{P}%
\left\{ \frac{\sqrt{n}}{|X|}>\delta ^{-1}\right\} 
\end{eqnarray*}%
As noted in Section \ref{bkg},%
\begin{equation*}
\mathbb{P}\left\{ \frac{\sqrt{n}}{|X|}\sum_{i=1}^{n}\theta _{i}X_{i}\leq
(1+\delta )^{2}t\right\} =\Phi _{n}\left( (1+\delta )^{2}t\right) 
\end{equation*}%
By Hoeffding's inequality,%
\begin{equation*}
\mathbb{P}\left\{ \left\vert \sum_{i=1}^{n}\theta _{i}Z_{i}\right\vert \geq
\delta ^{-2}t/2\right\} \leq 2\exp (-\delta ^{-4}t^{2}/2)
\end{equation*}%
and $\mathbb{P}\left\{ |Z|\geq \delta ^{-1}\sqrt{n}\right\} \leq 2\exp
(-\delta ^{-4}n)$. By Lemma \ref{vol of ball} on $\mathrm{vol}_{n}\left(
B_{2}^{n}\right) $, 
\begin{equation*}
\mathbb{P}\left\{ |X|\leq \delta \sqrt{n}\right\} \leq (2\pi )^{-n/2}\mathrm{%
vol}_{n}\left( \delta \sqrt{n}B_{2}^{n}\right) \leq e^{n/2}\delta ^{n}
\end{equation*}%
This implies, via the inequality $\left\vert \left\vert \sigma X\right\vert
-\left\vert Y\right\vert \right\vert \leq \left\vert Z\right\vert $, that%
\begin{equation*}
\mathbb{P}\left\{ (1-\delta )\left\vert \sigma X\right\vert \leq \left\vert
Y\right\vert \leq (1+\delta )\left\vert \sigma X\right\vert \right\} \geq
1-2e^{n/2}\delta ^{n}
\end{equation*}%
It follows that%
\begin{equation*}
\mathbb{P}\left\{ \sqrt{n}|Y|^{-1}\sum_{i=1}^{n}\theta _{i}Q_{i}\leq
t\right\} \leq \Phi _{n}((1+\delta )^{2}t)+5e^{n/2}\delta ^{n}+2\exp
(-\delta ^{-4}t^{2}/2)
\end{equation*}%
By Lemma \ref{normal tails}%
\begin{equation*}
\delta \left( 1-\Phi _{n}((1+\delta )^{2}t)\right) \geq \delta \left( 1-\Phi
_{n}((1-\delta )\sqrt{n}\right) )\geq \frac{1}{5\sqrt{2\pi n}}\delta
^{(n+1)/2}\geq 10e^{n/2}\delta ^{n}
\end{equation*}%
Our remaining task is to show that $4\exp (-\delta ^{-4}t^{2}/2)\leq \delta
\left( 1-\Phi _{n}((1+\delta )^{2}t)\right) $. Using Lemma \ref{normal tails}
again, it suffices to have%
\begin{equation}
4\exp (-\delta ^{-4}t^{2}/2)\leq \frac{\delta }{6t\sqrt{\pi }}\left( 1-\frac{%
(1+\delta )^{4}t^{2}}{n}\right) ^{(n-1)/2}  \label{12wdr}
\end{equation}%
We now consider two cases. In Case 1, $t\geq \delta \sqrt{n}$. Since $%
(1+\delta )^{4}t^{2}\leq (1-\delta )n$, (\ref{12wdr}) is implied by%
\begin{equation*}
\frac{1}{2}\delta ^{-4}t^{2}\geq \log (24\sqrt{\pi })+\log t+\frac{n+1}{2}%
\log \delta ^{-1}
\end{equation*}%
which holds by the assumption of Case 1. In Case 2, $t<\delta \sqrt{n}$.
Using the fact that $\log (1-x)^{-1}\leq 2x$ whenever $0<x<1/2$, a
sufficient condition for (\ref{12wdr}) to hold is that%
\begin{equation*}
\delta ^{-4}t^{2}\geq 2\log (24\sqrt{\pi })+2\log t+2\log \delta ^{-1}+3t^{2}
\end{equation*}%
which is true by the bounds imposed on $t$ and $\delta $. This proves the
left hand inequality in (\ref{distribution of Y}). The right hand inequality
follows similar lines.
\end{proof}

\begin{lemma}
\label{middle bulk}For all $0\leq t\leq 2$,%
\begin{equation*}
\Phi _{n}(t-5\delta )\leq \mathbb{P}\left\{ \sum_{i=1}^{n}\theta
_{i}Q_{i}\leq t\right\} \leq \Phi _{n}(t+5\delta )
\end{equation*}
\end{lemma}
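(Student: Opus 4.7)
The plan is to couple the empirical projection $\sum_{i=1}^n \theta_i Q_i$ to the idealized Gaussian projection $W = \sqrt{n}\,|X|^{-1}\sum_{i=1}^n \theta_i X_i$, which has CDF exactly $\Phi_n$, and to show that on a high-probability good event the two quantities differ by at most $3\delta$. The remaining $2\delta$ of slack in the window of the conclusion will then absorb the exponentially small bad-event probability.

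Using $Y=\sigma X+Z$ and the definition of $Q$, decompose
\begin{equation*}
\sum_{i=1}^n \theta_i Q_i \;=\; \frac{\sigma|X|}{|Y|}\,W \;+\; \frac{\sqrt{n}}{|Y|}\sum_{i=1}^n \theta_i Z_i
\end{equation*}
on $\{Y\ne 0\}\cap\{|Y|\le\alpha\sqrt{n}\}$, and define the good event $E$ by requiring: (a) $(1-\delta)\sigma|X|\le|Y|\le(1+\delta)\sigma|X|$; (b) $|X|\ge\delta\sqrt{n}$; (c) $|Y|\le\alpha\sqrt{n}$; and (d) $|\sum_i\theta_i Z_i|\le\delta^{-1}$. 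Events (a)--(c) are controlled exactly as in the proof of Lemma \ref{first estimate on Y}, each contributing a term of order $e^{n/2}\delta^n$ to $\mathbb{P}(E^c)$. For (d), the variables $Z_i=[\sigma X_i]-\sigma X_i$ are independent, bounded by $1/2$ in absolute value, with mean zero by the symmetry of the Gaussian density, so Hoeffding's inequality gives $\mathbb{P}\{|\sum_i\theta_i Z_i|>\delta^{-1}\}\le 2\exp(-2\delta^{-2})$.

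On $E$ the ratio $\rho:=\sigma|X|/|Y|$ lies in $[(1+\delta)^{-1},(1-\delta)^{-1}]$, and using $\sigma=\delta^{-4}$ and $|X|\ge\delta\sqrt{n}$,
\begin{equation*}
\left|\frac{\sqrt{n}}{|Y|}\sum_{i=1}^n \theta_i Z_i\right| \;\le\; \frac{1}{(1-\delta)\sigma\delta}\cdot\delta^{-1} \;\le\; 2\delta^2,
\end{equation*}
so $\sum_i\theta_i Q_i = \rho W + \xi$ with $|\xi|\le 2\delta^2$. For $t\in[0,2]$ and $\delta$ sufficiently small, elementary algebra yields $\{\sum_i\theta_i Q_i\le t\}\cap E\subseteq\{W\le(t+2\delta^2)(1+\delta)\}\subseteq\{W\le t+3\delta\}$, and symmetrically $\{\sum_i\theta_i Q_i>t\}\cap E\subseteq\{W>t-3\delta\}$.

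Combining yields $\mathbb{P}\{\sum_i\theta_i Q_i\le t\}\le\Phi_n(t+3\delta)+\mathbb{P}(E^c)$. Since $\phi_n$ is positive and monotone decreasing on $[0,\sqrt{n})$ and since $t+5\delta<\sqrt{n}$ for all $n\ge 6$ in the parameter range $\delta<1/1429$, we have $\Phi_n(t+5\delta)-\Phi_n(t+3\delta)\ge 2\delta\,\phi_n(t+5\delta)$, which easily dominates $\mathbb{P}(E^c)$; the upper bound follows, and the lower bound is symmetric. The main obstacle is the bookkeeping of constants on $E$, ensuring the perturbation really fits within the $3\delta$ margin; the slack absorbing $\mathbb{P}(E^c)$ becomes delicate only for small $n$ (like $n=6$) with $t$ near $\sqrt{n}$, where $\phi_n$ degenerates, but in our regime this is comfortably satisfied.
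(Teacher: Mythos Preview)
Your proof is correct and follows essentially the same route as the paper's: both compare $\sum_i\theta_iQ_i$ to $W=\sqrt{n}\,|X|^{-1}\sum_i\theta_iX_i$ via the decomposition $Y=\sigma X+Z$, bound the same bad events (small $|X|$, large $|Y|$, large $|\sum\theta_iZ_i|$) by the same tools (ball volume and Hoeffding), and absorb the exponentially small error into a $2\delta$ window of $\Phi_n$ using the mean value theorem. The only difference is presentational: the paper proceeds through a chain of union-bound inequalities inline, arriving at $\Phi_n((1+\delta)(t+\delta^2))+6e^{n/2}\delta^n+2\exp(-\delta^{-2})\le\Phi_n(t+5\delta)$, whereas you package the good event $E$ explicitly and work deterministically on it; the content is the same.
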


\begin{proof}
We refer the reader to the proof of Lemma \ref{first estimate on Y} for many
of the details. By the union bound, a lower bound on the growth rate of $%
\Phi _{n}$ (via the mean value theorem),%
\begin{eqnarray*}
&&\mathbb{P}\left\{ \sum_{i=1}^{n}\theta _{i}Q_{i}\leq t\right\}  \\
&\leq &\mathbb{P}\left\{ \sqrt{n}|Y|^{-1}\sum_{i=1}^{n}\theta _{i}Y_{i}\leq
t\right\} +\mathbb{P}\left\{ Y=0\right\} +\mathbb{P}\left\{ |Y|>\alpha \sqrt{%
n}\right\}  \\
&\leq &\mathbb{P}\left\{ \frac{\left\vert \sigma X\right\vert }{|Y|}\frac{%
\sqrt{n}}{|X|}\sum_{i=1}^{n}\theta _{i}X_{i}\leq t+\delta ^{2}\right\} +%
\mathbb{P}\left\{ \frac{\left\vert \sigma X\right\vert }{|Y|}\frac{\sqrt{n}}{%
|\sigma X|}\left\vert \sum_{i=1}^{n}\theta _{i}Z_{i}\right\vert >\delta
^{2}\right\} +2e^{n/2}\delta ^{n} \\
&\leq &\mathbb{P}\left\{ \frac{\sqrt{n}}{|X|}\sum_{i=1}^{n}\theta
_{i}X_{i}\leq (1+\delta )(t+\delta ^{2})\right\} +\mathbb{P}\left\{ \frac{%
\left\vert \sigma X\right\vert }{|Y|}<(1+\delta )^{-1}\right\}
+2e^{n/2}\delta ^{n} \\
&&+\mathbb{P}\left\{ \left\vert \sum_{i=1}^{n}\theta _{i}Z_{i}\right\vert
>(1-\delta )\delta ^{-1}\right\} +\mathbb{P}\left\{ \frac{\left\vert \sigma
X\right\vert }{|Y|}>(1-\delta )^{-1}\right\} +\mathbb{P}\left\{ \sqrt{n}%
|X|^{-1}>\delta ^{-1}\right\}  \\
&\leq &\Phi _{n}((1+\delta )(t+\delta ^{2}))+6e^{n/2}\delta ^{n}+2\exp
(-\delta ^{-2})\leq \Phi _{n}(t+5\delta )
\end{eqnarray*}%
The lower bound follows by similar reasoning, and using an upper bound on
the growth rate of $\Phi _{n}$.
\end{proof}

\begin{lemma}
\label{some conc F}For all $0\leq t\leq 2$,%
\begin{equation*}
\Phi _{n}(t-6\delta )\leq F_{\theta }(t)\leq \Phi _{n}(t+6\delta )
\end{equation*}%
and for all $1\leq t\leq (1-3\delta )\sqrt{n}$,%
\begin{equation*}
(1-4\delta )(1-\Phi _{n}((1+3\delta )t))\leq 1-F_{\theta }(t)\leq (1+4\delta
)(1-\Phi _{n}((1-2\delta )t))
\end{equation*}
\end{lemma}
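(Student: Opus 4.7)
The plan is to reduce Lemma~\ref{some conc F} to the probabilistic estimates of Lemmas~\ref{first estimate on Y} and~\ref{middle bulk} by controlling the total variation distance between $\mu$, the empirical distribution of the rows of $T$, and $\nu$, the law of the random vector $Q$ analyzed in those lemmas. For any half-space $A_t=\{y\in\mathbb{R}^n:\langle\theta,y\rangle\le t\}$ one has
$$\bigl|F_\theta(t)-\mathbb{P}\bigl\{\textstyle\sum_{i=1}^n\theta_iQ_i\le t\bigr\}\bigr|=|\mu(A_t)-\nu(A_t)|\le \|\mu-\nu\|_{\mathrm{TV}}.$$
If the right-hand side is small compared to $\delta$ in the middle bulk regime and small compared to $\delta(1-\Phi_n((1+3\delta)t))$ in the tail regime, then the passage from the shifts $\pm 5\delta$ and factors $1\pm\delta$ of Lemmas~\ref{middle bulk} and~\ref{first estimate on Y} to the shifts $\pm 6\delta$ and factors $1\pm 4\delta$ required here will follow: for $|t|\le 2$ the gap $\Phi_n(t+6\delta)-\Phi_n(t+5\delta)$ is at least $\delta\phi_n(2+6\delta)$, which is a positive universal constant for $n\ge 6$, and for the tail we distribute the extra factor $3\delta$ using Lemma~\ref{normal tails}.

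To bound $\|\mu-\nu\|_{\mathrm{TV}}$, observe that both measures are supported on the finite set $\{\sqrt{n}x/|x|:x\in\mathbb{Z}^n,\ 0<|x|\le\alpha\sqrt{n}\}\cup\{0\}$. For every nonzero $x$ in this support, the weights $m(x)/N$ and $\mathbb{P}\{Y=x\}$ differ by at most $1/N$, directly from the floor in the definition of $m(x)$. The correction $m'(0)=m(0)+N-N'$ at the origin is designed precisely so that the mass at $0$ absorbs the accumulated floor discrepancies together with the tail mass $\mathbb{P}\{|Y|>\alpha\sqrt{n}\}$ that $\nu$ concentrates at $0$ through the indicator in the definition of $Q$. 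Summing the contributions yields
$$\|\mu-\nu\|_{\mathrm{TV}}\le\frac{2\,|\mathbb{Z}^n\cap\alpha\sqrt{n}B_2^n|}{N}.$$
Since each lattice point in $\alpha\sqrt{n}B_2^n$ is a corner of a unit cube contained in $(\alpha+\tfrac{1}{2})\sqrt{n}B_2^n$, Lemma~\ref{vol of ball} yields $|\mathbb{Z}^n\cap\alpha\sqrt{n}B_2^n|\le (C\alpha)^n$ for a universal constant $C$. Combined with $\alpha=2\delta^{-4}(\log\delta^{-1})^{1/2}$, $\delta=\varepsilon/1429$, and $n\le c(\log\varepsilon^{-1})^{-1}\log N$, this forces $(C\alpha)^n/N$ to be smaller than any prescribed power of $\delta$ once $c$ is chosen small enough.

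The main obstacle is the multiplicative tail estimate near the endpoint $t=(1-3\delta)\sqrt{n}$, where $1-\Phi_n((1-2\delta)t)$ is only of order $(10\delta)^{(n-1)/2}/\sqrt{n}$ by Lemma~\ref{normal tails}. Here the additive error $\|\mu-\nu\|_{\mathrm{TV}}$ must be dominated by $3\delta$ times this exponentially small quantity; verifying $(C\alpha)^n/N\lesssim \delta^{(n+1)/2}/\sqrt{n}$ is what the logarithmic constraint on $n$ is for. Explicitly, taking logarithms one needs $\log N\gtrsim n\log(C\alpha/\delta)\sim 5n\log\varepsilon^{-1}$, which is comfortably implied by $c\le 1/100$. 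Once this numerical balance is in hand, the lemma follows by routine arithmetic from Lemmas~\ref{first estimate on Y}, \ref{middle bulk}, and the total variation bound above.
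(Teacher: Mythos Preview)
Your approach is correct and reaches the same conclusion, but it differs from the paper's in one essential respect. You control $|\mu(E)-\nu(E)|$ \emph{additively} via a total variation bound $\|\mu-\nu\|_{\mathrm{TV}}\lesssim |\mathbb{Z}^n\cap\alpha\sqrt{n}B_2^n|/N$, then separately verify that this additive error is dominated by $3\delta$ times the smallest relevant tail probability. The paper instead observes that every nonzero atom satisfies a uniform lower bound $\mathbb{P}\{Y=x\}\ge (2\pi)^{-n/2}\sigma^{-n}\exp(-\sigma^{-2}(\alpha+1/2)^2 n/2)$, so that the floor error $1/N$ is at most $\delta\sigma^{-1}$ times the atom itself; this yields the \emph{multiplicative} comparison $(1-\delta\sigma^{-1})\mathbb{P}\{Q\in E\}\le\mu(E)\le(1+\delta\sigma^{-1})\mathbb{P}\{Q\in E\}$ for every Borel set $E$ avoiding the origin. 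The multiplicative route transfers directly to $1-F_\theta(t)$ without any case analysis on how small the tail is, whereas your route must chase the worst case at $t=(1-3\delta)\sqrt{n}$. One small imprecision: you quote the binding tail size as $(10\delta)^{(n-1)/2}/\sqrt{n}$, coming from $(1-2\delta)t$, but the lower inequality actually forces you to beat $3\delta(1-\Phi_n((1+3\delta)t))$, which at the endpoint is of order $(18\delta^2)^{(n-1)/2}/\sqrt{n}\asymp\delta^{n-1}/\sqrt{n}$ --- a stronger requirement. Fortunately your final numerical check $\log N\gtrsim 5n\log\varepsilon^{-1}$ (with $c\le 1/100$) still covers it, so the argument survives. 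Both derivations ultimately impose essentially the same constraint on $N$; the paper's multiplicative comparison is just a cleaner way to package it.
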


\begin{proof}
Consider the distribution of $Y\cdot 1_{\left\{ |Y|\leq \alpha \sqrt{n}%
\right\} }$ and the empirical distribution of the sequence $(x_{i})_{1}^{N}$%
, where each integer point $x$ is repeated $m^{\prime }(x)$ times in the
sequence,%
\begin{equation*}
\eta =\frac{1}{N}\sum_{i=1}^{N}\delta (x_{i})
\end{equation*}%
These two probability measures are both discrete measures supported on $%
\mathbb{Z}^{n}\cap \left( \alpha \sqrt{n}B_{2}^{n}\right) $ that we now
compare. For each nonzero $x\in \mathbb{Z}^{n}\cap \left( \alpha \sqrt{n}%
B_{2}^{n}\right) $ we have, by definition of $m^{\prime }(\cdot )$ and $Y$, 
\begin{equation*}
\left\vert N\cdot \mathbb{P}\left\{ Y=x\right\} -m^{\prime }(x)\right\vert
\leq 1
\end{equation*}%
as well as,%
\begin{eqnarray*}
\mathbb{P}\left\{ Y=x\right\}  &\geq &(2\pi )^{-n/2}\sigma ^{-n}\exp \left(
-1/2\sigma ^{-2}(\alpha +1/2)^{2}n\right)  \\
&=&\exp \left[ -\frac{1}{2}\left( \sigma ^{-2}(\alpha +1/2)^{2}+\log 2\pi
+\log \sigma ^{2}\right) n\right] 
\end{eqnarray*}%
By the bounds imposed on $N$,%
\begin{equation}
N\geq \delta ^{-1}\sigma \exp \left[ \frac{1}{2}\left( \sigma ^{-2}(\alpha
+1/2)^{2}+\log 2\pi +\log \sigma ^{2}\right) n\right]   \label{by bound on N}
\end{equation}%
This implies that%
\begin{equation*}
\left( 1-\delta \sigma ^{-1}\right) \mathbb{P}\left\{ Y=x\right\} \leq \eta
(\{x\})\leq \left( 1+\delta \sigma ^{-1}\right) \mathbb{P}\left\{
Y=x\right\} 
\end{equation*}%
Consequently, for any Borel set $E\subset \mathbb{R}^{n}$ with $0\notin E$,%
\begin{equation*}
\left( 1-\delta \sigma ^{-1}\right) \mathbb{P}\left\{ Y\in E\cap \alpha 
\sqrt{n}B_{2}^{n}\right\} \leq \eta (E)\leq \left( 1+\delta \sigma
^{-1}\right) \mathbb{P}\left\{ Y\in E\cap \alpha \sqrt{n}B_{2}^{n}\right\} 
\end{equation*}%
Hence, the same is true when we take the radial projections of these
measures onto $\sqrt{n}S^{n-1}$, i.e.%
\begin{equation*}
\left( 1-\delta \sigma ^{-1}\right) \mathbb{P}\left\{ Q\in E\right\} \leq
\mu (E)\leq \left( 1+\delta \sigma ^{-1}\right) \mathbb{P}\left\{ Q\in
E\right\} 
\end{equation*}%
Lastly,%
\begin{equation*}
\left\vert \mu (\{0\})-\mathbb{P}\left\{ Q=0\right\} \right\vert =\left\vert
\mu (\mathbb{R}^{n}\backslash \{0\})-\mathbb{P}\left\{ Q\neq 0\right\}
\right\vert \leq \delta \sigma ^{-1}\mathbb{P}\left\{ Q\neq 0\right\} \leq
\delta \sigma ^{-1}
\end{equation*}%
The result now follows from Lemma \ref{middle bulk} (using properties of $%
\Phi _{n}$ as before) and Lemma \ref{first estimate on Y}.
\end{proof}

\begin{lemma}
\label{tail mult conc Y}For all $1\leq t\leq (1-3\delta )\sqrt{n}$%
\begin{equation*}
\Phi _{n}((1-10\delta )t)\leq F_{\theta }(t)\leq \Phi _{n}((1+12\delta )t)
\end{equation*}
\end{lemma}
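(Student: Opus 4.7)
The plan is to deduce this lemma from Lemma~\ref{some conc F} by splitting into the regimes $1 \le t \le 2$ and $2 \le t \le (1-3\delta)\sqrt{n}$. For the small-$t$ regime I will invoke the first (additive) bound in Lemma~\ref{some conc F}: since $t \ge 1$ one has $10\delta t \ge 6\delta$, so $(1-10\delta)t \le t-6\delta$; and since $t \ge 1/2$ one has $t+6\delta \le (1+12\delta)t$. Monotonicity of $\Phi_n$ then yields both the upper and lower target inequalities.

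For the large-$t$ regime I will invoke the second (multiplicative) bound in Lemma~\ref{some conc F}. The desired lower bound $F_\theta(t) \ge \Phi_n((1-10\delta)t)$ rearranges to
$$\Phi_n((1-2\delta)t)-\Phi_n((1-10\delta)t) \ge 4\delta\bigl(1-\Phi_n((1-2\delta)t)\bigr),$$
and (when $(1+12\delta)t < \sqrt{n}$; otherwise the upper bound is trivial since $\Phi_n((1+12\delta)t)=1$) the desired upper bound rearranges to
$$\Phi_n((1+12\delta)t)-\Phi_n((1+3\delta)t) \ge 4\delta\bigl(1-\Phi_n((1+3\delta)t)\bigr).$$
For each I will bound the left-hand side from below via the mean value theorem together with the monotonicity of $\phi_n$ on $[0,\sqrt{n}]$, and the right-hand side from above via Lemma~\ref{normal tails}. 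After cancelling a common factor of $\phi_n$ the inequalities reduce to algebraic statements of the form $c(1\pm O(\delta))t^2(n-3) \ge n-(1\pm O(\delta))^2 t^2$, which are comfortable for $t\ge 2$ and $n\ge 6$.

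The main obstacle will be the upper-bound inequality when $(1+12\delta)t$ is close to (but still less than) $\sqrt{n}$: here the naive mean-value lower bound $9\delta t \phi_n((1+12\delta)t)$ deteriorates because $\phi_n((1+12\delta)t)$ is tiny. In this transition region I will switch strategies and bound the ratio $(1-\Phi_n((1+12\delta)t))/(1-\Phi_n((1+3\delta)t))$ directly by combining both sides of Lemma~\ref{normal tails}, obtaining
$$\frac{2(1+3\delta)}{1+12\delta}\left(\frac{n-(1+12\delta)^2 t^2}{n-(1+3\delta)^2 t^2}\right)^{(n-1)/2},$$
which tends to $0$ as $(1+12\delta)t \uparrow \sqrt{n}$ and is therefore below $1-4\delta$ once $t$ is large enough. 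A suitable choice of cutoff between the two sub-regimes will ensure that on each side of the cutoff one of the two estimates succeeds with room to spare.
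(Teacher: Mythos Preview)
Your approach is correct in outline and can be made to work, but it takes a genuinely different route from the paper. The paper does \emph{not} split the range $[1,(1-3\delta)\sqrt{n}]$ at all, nor does it use the additive bound from the first part of Lemma~\ref{some conc F}. Instead it applies the second (multiplicative) part of Lemma~\ref{some conc F} uniformly on the whole range and then invokes Lemma~\ref{Lip Phi}, the log-scale Lipschitz estimate for $\Phi_n^{-1}$. Concretely: from $1-F_\theta(t)\le(1+4\delta)(1-\Phi_n((1-2\delta)t))$ one writes the right side as $\Phi_n\circ\Phi_n^{-1}$ of itself, and Lemma~\ref{Lip Phi} with $a=1-\Phi_n((1-2\delta)t)$ and $b=(1+4\delta)a$ gives
\[
\Phi_n^{-1}\bigl((1+4\delta)(1-\Phi_n((1-2\delta)t))\bigr)\le -(1-2\delta)t+\sqrt{\pi}\ln(1+4\delta),
\]
so that $1-F_\theta(t)\le 1-\Phi_n\bigl((1-2\delta)t-\sqrt{\pi}\ln(1+4\delta)\bigr)$, and since $\sqrt{\pi}\ln(1+4\delta)\le 8\delta\le 8\delta t$ the lower bound follows. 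The upper bound is the mirror image, using $\ln(1-4\delta)\ge -5\delta$. This is a two-line argument with no case analysis.

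By contrast, your plan requires the split $[1,2]\cup[2,(1-3\delta)\sqrt{n}]$, and for the upper bound on the second interval a further subdivision into a ``moderate $t$'' sub-regime (MVT plus Lemma~\ref{normal tails}) and a ``large $t$'' sub-regime (ratio of tails via both sides of Lemma~\ref{normal tails}). Your remark that one can ``cancel a common factor of $\phi_n$'' is accurate only for the lower bound, where both sides carry $\phi_n((1-2\delta)t)$; for the upper bound the MVT produces $\phi_n((1+12\delta)t)$ while the tail bound produces $\phi_n((1+3\delta)t)$, which is exactly the mismatch you then flag as the ``main obstacle''. The patching does work --- one checks that the two sub-regimes together cover every value of the ratio $r=(n-(1+12\delta)^2t^2)/(n-(1+3\delta)^2t^2)$ --- but it is considerably more labour than the paper's route, and the log-Lipschitz lemma is precisely the tool that makes all of this unnecessary.
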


\begin{proof}
By Lemma \ref{Lip Phi} and Lemma \ref{some conc F}, as well as the equation $%
\Phi _{n}(-x)=1-\Phi _{n}(x)$ and the fact that $t\geq 1$,%
\begin{eqnarray*}
1-F_{\theta }(t) &\leq &\Phi _{n}\circ \Phi _{n}{}^{-1}\left[ (1+4\delta
)(1-\Phi _{n}((1-2\delta )t))\right]  \\
&\leq &\Phi _{n}\left[ \sqrt{\pi }\ln (1+4\delta )-t(1-2\delta )\right]  \\
&\leq &1-\Phi _{n}(t(1-2\delta )-11\delta t)
\end{eqnarray*}%
The other side of the inequality follows similarly, using the fact that $\ln
(1-4\delta )\geq -5\delta $.
\end{proof}

\begin{proof}[Proof of Lemma \protect\ref{main approx of quantiles}]
Without loss of generality, we may assume that $s\geq 1/2$. Consider 3
cases. In Case 1,%
\begin{equation*}
\Phi _{n}((1-17\delta )\sqrt{n})<s<1
\end{equation*}%
In this case, using Lemma \ref{tail mult conc Y},%
\begin{equation*}
F_{\theta }\left( (1-29\delta )\sqrt{n}\right) \leq \Phi _{n}((1+12\delta
)(1-29\delta )\sqrt{n})<s
\end{equation*}%
which implies that $F_{\theta }^{-1}(s)\geq (1-29\delta )\sqrt{n}$. On the
other hand, since $\mu $ is supported on $\sqrt{n}S^{n-1}$, $F_{\theta
}^{-1}(s)\leq \sqrt{n}$. In Case 2,%
\begin{equation*}
\Phi _{n}\left( 1.5\right) \leq s\leq \Phi _{n}((1-17\delta )\sqrt{n})
\end{equation*}%
and it follows by Lemma \ref{tail mult conc Y} and the fact that $\Phi _{n}$
is strictly increasing on $[-\sqrt{n},\sqrt{n}]$ that%
\begin{equation*}
F_{\theta }\left( (1+16\delta )^{-1}\Phi _{n}{}^{-1}\left( s\right) \right)
<s\leq F_{\theta }\left( (1-14\delta )^{-1}\Phi _{n}{}^{-1}\left( s\right)
\right) 
\end{equation*}%
which implies $(1-16\delta )\Phi _{n}{}^{-1}\left( s\right) \leq F_{\theta
}^{-1}(s)\leq (1+20\delta )\Phi _{n}{}^{-1}\left( s\right) $. In Case 3, $%
1/2\leq s<\Phi _{n}(1.5)$ and it follows by Lemma \ref{some conc F} that%
\begin{equation*}
F_{\theta }\left( \Phi _{n}{}^{-1}\left( s\right) -7\delta \right)
<s<F_{\theta }\left( \Phi _{n}{}^{-1}\left( s\right) +7\delta \right) 
\end{equation*}%
and that $\Phi _{n}{}^{-1}\left( s\right) -7\delta <F_{\theta }^{-1}(s)<\Phi
_{n}{}^{-1}\left( s\right) +7\delta $.
\end{proof}


\begin{thebibliography}{99}
\bibitem{AM06} Artstein-Avidan, S., Milman, V.D.: Logarithmic reduction of
the level of randomness in some probabilistic geometric constructions. J.
Funct. Anal. 235 (1), 297-329 (2006)

\bibitem{AM07} Artstein-Avidan, S., Milman, V.D.: Using Rademacher
permutations to reduce randomness. Algebra i Analiz 19 (1), 23-45 (2007).
Translated in: St. Petersburg Math. J. 19 (1), 15-31 (2008)

\bibitem{BoLiMi} Bourgain, J., Lindenstrauss, J.: Almost Euclidean sections
in spaces with a symmetric basis. Geometric aspects of functional analysis
(1987-88), 278-288, Lecture Notes in Math., 1376, Springer, Berlin (1989)

\bibitem{DF} Diaconis, P., Freedman, D.: A dozen de Fenetti-style results in
search of a theory. Ann. Inst. H. Poincar\'{e} Probab. Statist. 23 (2),
397-423 (1987)

\bibitem{GLR} Guruswami, V., Lee, J., Razborov, A.: Almost Euclidean
subspaces of $\ell _{1}^{N}$ via expander codes. Proceedings of the
Nineteenth Annual ACM-SIAM Symposium on Discrete Algorithms, 353-362, ACM,
New York (2008)

\bibitem{GLW08} Guruswami, V., Lee, J., Wigderson, A.: Euclidean sections of 
$\ell _{1}^{N}$ with sublinear randomness and error-correction over the
reals. Approximation, randomization and combinatorial optimization, 444-454,
Lecture Notes in Comput. Sci., 5171, Springer, Berlin, (2008)

\bibitem{Hoef} Hoeffding, W.: Probability inequalities for sums of bounded
random variables. J. Amer. Statist. Assoc. 58, 13-30 (1963)

\bibitem{Ind00} Indyk, P.: Dimensionality reduction techniques for proximity
problems. Proceedings of the eleventh annual ACM-SIAM Symposium on Discrete
Algorithms (San Francisco, CA, 2000), 371-378, ACM, New York (2000)

\bibitem{Ind07} Indyk, P.: Uncertainty principles, extractors, and explicit
embedding of $\ell _{2}$ into $\ell _{1}$. STOC'07-Proceedings of the 39th
Annual ACM Symposium on Theory of Computing, 615-620, ACM, New York (2007)

\bibitem{IndSz} Indyk, P., Szarek, S.: Almost-Euclidean subspaces of $\ell
_{1}^{N}$ via tensor products: a simple approach to randomness reduction.
Approximation, randomization, and combinatorial optimization, 632-641,
Lecture Notes in Comput. Sci., 6302, Springer, Berlin, 2010.

\bibitem{JoSc} Johnson, W. B., Schechtman, G.: Finite dimensional subspaces
of $L_{p}$. Handbook of the geometry of Banach spaces, Vol. I, 837-870,
North-Holland, Amsterdam (2001)

\bibitem{KaSz} Kashin, B., Szarek, S.: The Knaster problem and the geometry
of high-dimensional cubes. C. R. Math. Acad. Sci. Paris 336 (11), 931-936
(2003)

\bibitem{KoKo} Koldobsky, A., K\"{o}nig, H.: Aspects of the isometric theory
of Banach spaces. Handbook of the geometry of Banach spaces, Vol. I,
899-939, North-Holland, Amsterdam (2001)

\bibitem{Kol} Koldobsky, A.: Fourier analysis in convex geometry.
Mathematical Surveys and Monographs, 116. American Mathematical Society,
Providence, RI (2005)

\bibitem{LS} Lovett, S., Sodin, S.: Almost Euclidean sections of the $N$%
-dimensional cross-polytope using $O(N)$ random bits. Commun. Contemp. Math.
10 (4), 477-489 (2008)

\bibitem{Mil} Milman, V. D.: A new proof of the theorem of A. Dvoretzky on
sections of convex bodies. Funct. Anal. Appl. 5, 28-37 (1971)

\bibitem{MiSc} Milman, V. D., Schechtman, G.: Asymptotic theory of
finite-dimensional normed spaces. Lecture Notes in Mathematics, 1200.
Springer-Verlag, Berlin (1986)

\bibitem{Mil88} Milman, V. D.: A few observations on the connections between
local theory and some other fields. Geometric aspects of functional analysis
(1986/87), 283-289, Lecture Notes in Math., 1317, Springer, Berlin (1988)

\bibitem{Sch0} Schechtman, G.: Special orthogonal splittings of $L_{1}^{2k}$%
. Israel J. Math. 139, 337-347 (2004)

\bibitem{Sch1} Schechtman, G.: Two observations regarding embedding subsets
of Euclidean space in normed spaces. Adv. Math. 200 (1), 125-135 (2006)

\bibitem{Sch2} Schechtman, G.: Euclidean sections of convex bodies.
Available online at www.wisdom.weizmann.ac.il/\symbol{126}gideon/

\bibitem{SzICM} Szarek, S.: Convexity, complexity and high dimensions.
Proceedings of the International Congress of Mathematicians (Madrid 2006),
vol. II, 1599-1621. European Math. Soc. (2006) Full document available at:
www.icm2006.org. Slides available at www.cwru.edu/artsci/math/szarek/

\bibitem{Tich} Tikhomirov, K.: Almost Euclidean sections in symmetric spaces
and concentration of order statistics. J. Funct. Anal. 265 (9), 2074-2088,
(2013)
\end{thebibliography}
\end{document}